\documentclass[12pt,longbibliography]{article}

\usepackage[utf8]{inputenc} % Allow direct use of accents such as á é ñ.
\usepackage[T1]{fontenc}

\usepackage{graphicx}
\usepackage{amsmath}
\usepackage{amsfonts}
\usepackage{mathrsfs}           % \mathscr font.
\usepackage{amsthm}
\usepackage{amssymb}
\usepackage{color}
\usepackage{enumerate}
\usepackage{comment}

\usepackage[a4paper, margin=2.7cm]{geometry}

\usepackage[colorlinks=true,linkcolor=blue,citecolor=blue,urlcolor=blue,breaklinks]{hyperref}

\usepackage{bbm}

\usepackage{breakurl}
\usepackage{url}
\def\N{\mathbb{N}}
\def\R{\mathbb{R}}

\def\d{\,\mathrm{d}}

\newtheorem{thm}{Theorem}[section]
\newtheorem{cor}[thm]{Corollary}
\newtheorem{lem}[thm]{Lemma}

\theoremstyle{definition}
\newtheorem{dfn}[thm]{Definition}
\theoremstyle{remark}
\newtheorem{rem}[thm]{Remark}
\theoremstyle{example}

\title{Intrinsic Ultracontractivity for a class of Schrödinger Semigroups in $\mathrm{L}^{2}\left( \R^{n} \right)$
using Log-Sobolev-inequalities and duality arguments}

\author{Christoph Schwerdt$^1$ \and Ilham Ouelddris$^2$}
\date{%
	$^1$ Institute of Mathematics, University of Rostock,\\
	Ulmenstra\ss e 69, 18 057 Rostock, Germany; \\%
	\textit{E-mail address}: \texttt{\href{mailto:christoph.schwerdt@uni-rostock.de}{christoph.schwerdt@uni-rostock.de}}\\
	\ \\
	$^2$ Department of Mathmatics, \\
	Cadi Ayyad University (Morocco), \\
	Marrakesh, Morocco; \\%
    \ \\
    \today
}

\begin{document}

\maketitle

\begin{abstract}
We present a class of potentials $q \colon \R^{n} \to (0,\infty)$ that implies the \textit{weighted Schrödinger 
semigroup} $\varphi^{-1}\mathrm{e}^{-tH}\varphi$ to map a \textit{weighted Lebesgue function space} 
$\mathrm{L}_{\mu}^{1}(\R^{n})$ into a weighted Lebesgue function space $\mathrm{L}_{\mu}^{2}(\R^{n})$ 
continously at every time $t>0$ by \textit{Logarithmic Sobolev inequalities} for $H=-\Delta + q(x)$ with it's strictly 
positive ground state $\varphi \colon \R^{n} \to (0,\infty)$. We use the self-adjointness of $\mathrm{e}^{-tH}$ in 
$\mathrm{L}^{2}(\R^{n})$ to infer an \textit{intrinsic ultracontractivity}, i.e.
$$
\forall t>0 \ \exists C_{t} > 0 \ : \ \left| \mathrm{e}^{-tH} u (x) \right| \ \leq \ C_{t} \varphi(x) \| u \|_{2}
$$
for every $u \in \mathrm{L}^{2}\left( \R^{n} \right)$ almost everywhere in $\R^{n}$. 
\end{abstract}

\newpage

\tableofcontents

\section{Introduction}

\subsection{Intrinsic ultracontractivity in $\mathrm{L}^{2}(\R^{n})$}

The Hamiltonian corresponding to a quantum system in $\R^{n}$ is given by the formal operator
$H =  -\Delta + q(x)$ in $\mathrm{L}^{2}(\R^{n})$ for $n \geq 3$ dimensions with $q \in \mathrm{L}^{1}_{loc}\left( \R^{n} \right)$. 
We focus on potentials $q$ being continuous and non-negative in $\R^{n}$ such that there exists a unique strictly positive 
eigenfunction $\varphi$ corresponding to the lowest eigenvalue $E_{0}$ of $H$ which is called the system's \textit{ground state}.
We present a growth condition of $q$ that implies \textit{Rosen inequalities} of $\varphi$, i.e.
\begin{equation}
\forall \varepsilon > 0 \ \exists \gamma(\varepsilon) > 0 \ : \ - \ln \left( \varphi(x) \right) \ \leq \ \varepsilon q(x) + \gamma(\varepsilon)
\end{equation}
for almost every $x \in \R^{n}$. While these inequalities are not particularly interesting in themselves, they imply 
\textit{Logarithmic Sobolev inequalties} which are essential to prove an
\textit{intrinsic ultracontractivity} of the Schrödinger semigroup $\mathrm{e}^{-tH}$, i.e.
\begin{equation}
\forall t > 0 \ \exists C_{t} > 0 \ : \ \left| \mathrm{e}^{-tH}u (x)  \right| \ \leq \ C_{t} \varphi(x) \| u \|_{2}
\end{equation}
for every $u \in \mathrm{L}^{2}(\R^{n})$ almost everywhere in $\R^{n}$. Notive that at every time $t>0$ the asymptotic behaviour 
of $\mathrm{e}^{-tH}u$ is dominated by the ground state $\varphi$ in this case. 
In particular, for a normed eigenfunction $v$ of $H$ we conclude to 
\begin{equation}
\exists C_{\lambda} > 0 \ : \ |v(x)| \leq C_{\lambda} \varphi(x)
\end{equation}
almost everywhere in $\R^{n}$ where $\lambda$ is the associated eigenvalue of $v$. Remember that in quantum physics
eigenfunctions are characterized as probability density functions of an electron's position at a certain energy level 
$\lambda$ of the system $H$.
But apart from quantum mechanics we can interpret intrinsic ultracontractivity as perturbation result of the generator $-\Delta$ 
of the free Schrödinger semigroup given by
\begin{equation}
\mathrm{e}^{t\Delta}u (x) = \frac{1}{(4 \pi t)^{\frac{n}{2}}} 
\int_{\R^{n}} \mathrm{e}^{-\frac{|x-y|^{2}}{4t}} u(y) \ \mathrm{d}y
\end{equation} 
for $u \in \mathrm{L}^{2}(\R^{n})$. Due to the Gaussian integral kernel $\mathrm{e}^{t\Delta}$ is 
a contraction in $\mathrm{L}^{p}(\R^{n})$ for every $p \in [1,\infty]$ and further maps $\mathrm{L}^{p}(\R^{n})$ into 
$\mathrm{L}^{q}(\R^{n})$ continuously for $1 \leq p < q \leq \infty$.
For the case of $H=-\Delta + q(x)$ we cite the following characterization of 
intrinsic ultracontractivity as Lemma 4.2.2 from page 110 in \cite{Davies07}. \\

\begin{lem} The Schrödinger semigroup $\mathrm{e}^{-tH}$ is intrinsic ultracontractive if and only if both of the following 
conditions are satisfied for every time $t > 0$
\begin{enumerate}[i)]
\item $\mathrm{e}^{-tH}u (x) = \int_{\R^{n}} \ k(t,x,y) u(y) \d y$ almost everywhere in $\R^{n}$ for 
$u \in \mathrm{L}^{2}(\R^{n})$ 
\item $\exists C_{t} > 0 \ : \ 0 \leq k(t,x,y) < C_{t} \ \varphi(x) \ \varphi(y)$ almost everywhere in $\R^{2n}$\\
\end{enumerate}
\end{lem}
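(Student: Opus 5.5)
We prove the two directions separately. The tools are duality in $\mathrm{L}^{2}(\R^{n})$, the self-adjointness and positivity of $\mathrm{e}^{-tH}$, and the semigroup law $\mathrm{e}^{-tH} = \mathrm{e}^{-\frac{t}{2}H}\mathrm{e}^{-\frac{t}{2}H}$.

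\emph{Sufficiency.} Assume i) and ii). For $u \in \mathrm{L}^{2}(\R^{n})$ we estimate, almost everywhere,
$$
\left| \mathrm{e}^{-tH}u(x) \right| \ \leq \ \int_{\R^{n}} k(t,x,y)\,|u(y)| \d y \ \leq \ C_{t}\,\varphi(x) \int_{\R^{n}} \varphi(y)\,|u(y)| \d y \ \leq \ C_{t}\,\varphi(x)\,\|\varphi\|_{2}\,\|u\|_{2}
$$
by Cauchy--Schwarz. This is intrinsic ultracontractivity, with constant $C_{t}$ if $\varphi$ is normalised ($\|\varphi\|_{2}=1$).

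\emph{Necessity.} Assume $|\mathrm{e}^{-tH}u(x)| \leq C_{t}\varphi(x)\|u\|_{2}$ for all $t>0$.

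First, for a.e.\ $x$ the map $u \mapsto \mathrm{e}^{-tH}u(x)$ is a bounded linear functional on $\mathrm{L}^{2}$. Making this rigorous needs some care because of the "a.e." quantifier. We handle it by fixing a countable dense subset of $\mathrm{L}^{2}$, taking the union of the exceptional null sets, and extending by density using the uniform bound. Riesz representation then gives $\kappa_{x} \in \mathrm{L}^{2}$ with $\|\kappa_{x}\|_{2} \leq C_{t}\varphi(x)$ and $\mathrm{e}^{-tH}u(x) = \langle u, \kappa_{x}\rangle$. Setting $k(t,x,y) := \kappa_{x}(y)$ yields i). Joint measurability of $k$ follows from the fact that $\mathrm{e}^{-tH}$ is then a Hilbert--Schmidt-type operator locally, or more simply from the Dunford--Pettis theorem.

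Nonnegativity $k \geq 0$ comes from positivity preservation of $\mathrm{e}^{-tH}$. This is the Beurling--Deny/Kato property for $q \geq 0$, via the Trotter product formula with the Gaussian kernel of $\mathrm{e}^{t\Delta}$.

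For ii) we use the semigroup law and self-adjointness:
$$
k(t,x,y) \ = \ \int_{\R^{n}} k(\tfrac t2,x,z)\,k(\tfrac t2,z,y) \d z \ = \ \langle k(\tfrac t2,x,\cdot),\,k(\tfrac t2,y,\cdot)\rangle,
$$
since symmetry of the kernel $k(s,z,y)=k(s,y,z)$ follows from self-adjointness. By Cauchy--Schwarz and the bound $\|\kappa_{x}\|_{2} \leq C_{t/2}\varphi(x)$ obtained above at time $t/2$, this gives
$$
k(t,x,y) \ \leq \ C_{t/2}^{2}\,\varphi(x)\,\varphi(y).
$$
Replacing the constant by a slightly larger one gives the strict inequality in ii).

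The main obstacle is the measure-theoretic bookkeeping: producing a jointly measurable kernel from pointwise a.e.\ functionals, and justifying that the composition identity holds for a.e.\ $(x,y)$. We handle these by working first on a countable dense set and by Fubini applied to $k(\frac t2,\cdot,\cdot) \in \mathrm{L}^{2}_{loc}$, which is justified by $\int |k(\frac t2,x,z)|^{2}\d z \leq C^{2}\varphi(x)^{2}$ and local boundedness of $\varphi$.
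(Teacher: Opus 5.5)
The paper does not prove this lemma; it only quotes Lemma 4.2.2 of \cite{Davies07}, so the relevant comparison is with the standard argument behind that result. Your proof is correct and self-contained, but it takes a different route from that standard argument.

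The standard route passes to the weighted semigroup $\mathrm{e}^{-t\tilde{H}}$ on $\mathrm{L}^{2}_{\mu}(\R^{n})$, where intrinsic ultracontractivity is plain ultracontractivity. It then uses $\|\mathrm{e}^{-t\tilde{H}}\|_{\mathrm{L}^{1}_{\mu}\to\mathrm{L}^{\infty}}\le\|\mathrm{e}^{-t\tilde{H}/2}\|_{\mathrm{L}^{2}_{\mu}\to\mathrm{L}^{\infty}}^{2}$, which follows from the semigroup law plus self-adjointness. Next it represents an operator bounded from $\mathrm{L}^{1}_{\mu}$ to $\mathrm{L}^{\infty}$ by a bounded kernel $\tilde{k}$, and finally sets $k(t,x,y)=\varphi(x)\tilde{k}(t,x,y)\varphi(y)$.

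You do the same thing in the unweighted space. Riesz representation replaces the kernel lemma. Your identity $k(t,x,y)=\langle\kappa^{t/2}_{x},\kappa^{t/2}_{y}\rangle$ followed by Cauchy--Schwarz is exactly the $T^{*}T$ estimate, with the weight carried by hand. It is also the duality idea the paper itself uses in the proof of its main theorem. Your version is more elementary. The weighted version gets joint measurability and the pointwise bound in one stroke from the kernel lemma.

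You can get the same economy on your side. Since $\varphi\in\mathrm{L}^{2}(\R^{n})$, your bound $\|\kappa_{x}\|_{2}\le C_{t}\varphi(x)$ gives $\int_{\R^{n}}\|\kappa_{x}\|_{2}^{2}\d x\le C_{t}^{2}\|\varphi\|_{2}^{2}<\infty$. So $\mathrm{e}^{-tH}$ is Hilbert--Schmidt on all of $\R^{n}$, not just locally, as you wrote. This immediately gives three things:
\begin{enumerate}[i)]
\item a jointly measurable kernel in $\mathrm{L}^{2}(\R^{2n})$;
\item its symmetry, since the kernel is real by positivity;
\item the almost-everywhere composition law $k(t,x,y)=\int_{\R^{n}} k(\tfrac t2,x,z)\,k(\tfrac t2,z,y)\d z$.
\end{enumerate}
This disposes of the measure-theoretic bookkeeping you identified as the main obstacle.
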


For potentials $q$ satisfying $q(x) > |x|^{2}$ for almost every $x \in \R^{n}$ Corollary 
\ref{Asymptotical_behaviour_varphi} implies $\varphi(x) \leq K\mathrm{e}^{-|x|}$ for a constant $K >0$ and $x \in \R^{n}$. 
Therefore the integral kernel of $\mathrm{e}^{-tH}$ indeed shares some mapping simularity to $\mathrm{e}^{t\Delta}$ 
for the case of intrinsic ultracontractivity.

\subsection{Historical context}

First results of intrinsic hypercontractivity and intrinsic ultracontractivity of Schrödinger semigroups date back to the 1970s
as mentioned on page 336 in \cite{DaviesSimon84}. However until the mid-1980s 
there was a folk belief that intrinsic ultracontractivity of Schrödinger semigroups in $\mathrm{L}^{2}(\R^{n})$ would not occur. 
A simple example in that regard is the quantum harmonic oscillator $H=-\Delta + |x|^{2}$ in $\mathrm{L}^{2}(\R^{n})$ where it 
is very easy to see that $\mathrm{e}^{-tH}$ is not intrinsic ultracontractive by 
comparing the asymptotical behaviour of eigenfunctions to the ground state. 
However, in \cite{DaviesSimon84}  intrinsic ultracontractivity of $\mathrm{e}^{-tH}$ is shown
for potentials close to $|x|^{2}$ like in
$H=-\Delta + |x|^{\beta}$ or in $H=-\Delta + |x|^{2} \left( \ln(|x|+2)\right)^{\beta}$ for any $\beta > 2$. 
Even more examples are given in \cite{Davies07} which provides 
an in-depth study on intrinsic ultracontractivity of Schrödinger semigroups in $\mathrm{L}^{2}(\R^{n})$.
For bounded domains $D \subset \R^{n}$ with a sufficiently smooth boundary Banuelos  \cite{BANUELOS1991} proves 
intrinsic ultracontractivity of Schrödinger semigroups in $\mathrm{L}^{2}(D)$ in 1990. 
In the 2000s B. Alziary and P. Takáč returned to the original studies of intrinsic ultracontractivity 
in $\mathrm{L}^{2}(\R^{n})$ in \cite{AlziaryTakac09} which was the starting point of \cite{Schwerdt_Mill_Hundertmark_2025} 
where the intrinsic ultracontractivity of $\mathrm{e}^{-tH}$ in $\mathrm{L}^{2}(\R^{n})$ is shown for potentials 
$q \colon \R^{n} \to [0,\infty)$ with
\begin{equation}
d \left( \int_{0}^{|x|} Q(t)^{\frac{1}{2}} \d t \right) \ f_{k,m} \left[ \ln \left( \int_{0}^{|x|} Q(t)^{\frac{1}{2}} \d t  \right) \right]
\leq q(x) \leq Q\left( |x| \right)
\end{equation}
for large $|x|$, $d \in (0,1]$ and auxiliary functions
$$
f_{k, m} (t) = \left( \ln^{(m)}(t) \right)^{k} \ \prod_{p=0}^{m-1} \ln^{(p)}(t)
$$ 
for $k>1$ and $m \in \N$ and large $t$.\footnote{Please mind that $\ln^{(m)}$ 
is used for the m-th iteration of the logarithmic function, i.e.
$$
\ln^{(m)}(r) = \underbrace{\ln( \dots ( \ln (r)) \dots )}_{m\text{-times}}
$$} 
Furthermore the upper boundary function $Q \colon [0,\infty) \to (0,\infty)$ of $q$ satisfies  
$
Q^{\prime}(r)/Q(r)^{-3/2} \ \to \ 0
$
for $r \to \infty$. Examples of such $Q$ are given in Lemma 3.3 by 
$$|x|^{\alpha}, |x|^{2} \left( \ln |x| \right)^{\alpha}, |x|^{2}  \left( \ln |x| \right)^{2} \left( \ln \ln |x| \right)^{\alpha}
$$ 
et cetera for $\alpha > 2$.

\subsection{Motivation of this article}

This article aims to reduce the growth conditions on the potential required for intrinsic ultracontractivity while clarifying the analytical mechanism underlying this property.\\
\ \\
We build upon the recent results of \cite{Schwerdt_Mill_Hundertmark_2025}, where intrinsic ultracontractivity of the Schr\"odinger semigroup $\mathrm{e}^{-tH}$ on $\mathrm{L}^{2}(\mathbb{R}^{n})$ was established under growth conditions involving a product of iterated logarithms. In contrast, we show that these product structures are a consequence of the proof method and not of the underlying setting.\\
\ \\
More precisely, we prove intrinsic ultracontractivity for Schr\"odinger operators 
\newline $H=-\Delta+q(x)$ with non-negative potentials satisfying
\begin{equation}
d \left( \int_{0}^{|x|} Q(t)^{\frac{1}{2}} \, \mathrm{d}t \right)
\left( \ln^{(m)} \left( \int_{0}^{|x|} Q(t)^{\frac{1}{2}} \, \mathrm{d}t \right) \right)^{k}
\leq q(x) \leq Q(|x|),
\end{equation}
for sufficiently large $|x|$ where $d \in (0,1]$, $k>0$, $m\in\mathbb{N}$, and $Q$ is an auxiliary function satisfying 
$
Q^{\prime}(r)/Q(r)^{-3/2} \ \to \ 0
$
for $r \to \infty$. Compared to \cite{Schwerdt_Mill_Hundertmark_2025} the lower boundary on $q$ is lower and significantly simpler, 
where the product 
$$
\displaystyle{\prod_{p=0}^{m-1}\ln^{(p)}(t)}
$$ 
appearing in earlier works is no longer required, and, in particular, the parameter $k\in(0,1)$ becomes admissible.\\
\ \\
The key insight of this paper is that this simplification is not accidental. It stems from a different analytical viewpoint on intrinsic ultracontractivity. Rather than following the classical route, suggested for instance in \cite{DaviesSimon84}, we include a 
duality-based approach. Weighted Lebesgue spaces $\mathrm{L}^{p}_{\mu}(\R^{n})$ and weighted Schrödinger semigroups
$$
\mathrm{e}^{-t\tilde{H}} = \frac{1}{\varphi} \mathrm{e}^{-tH} \varphi
$$
are still introduced. However, we show that our class of potentials $q$ imply the weighted Schrödinger semigroup operators 
to map continuously from $\mathrm{L}^{1}_{\mu}(\mathbb{R}^{n})$ into $\mathrm{L}^{2}_{\mu}(\mathbb{R}^{n})$. 
Then the self-adjointness of $\mathrm{e}^{-tH}$ in $\mathrm{L}^{2}(\mathbb{R}^{n})$ gives an intrinsic ultracontractivity 
in $\mathrm{L}^{2}(\mathbb{R}^{n})$.\\
\ \\
In this framework, the transition from $\mathrm{L}^{1}_{\mu}$ to $\mathrm{L}^{2}_{\mu}$ captures the essential regularizing effect of the semigroup, while the stronger $\mathrm{L}^{2}_{\mu} \to \mathrm{L}^{\infty}$ bound follows naturally by duality.
While logarithmic Sobolev inequalities still play a role in the analysis, the present approach avoids several technical layers inherent to the classical method. As a result, the proof becomes both shorter and more transparent, and it reveals that the restrictive logarithmic product conditions previously imposed are not a fundamental feature of intrinsic ultracontractivity, but rather an artifact of the $\mathrm{L}^{2} \to \mathrm{L}^{\infty}$ strategy.\\

\section{Definitions and preparations} \label{Definition_h_H}

We consider Schrödinger operators $H$ and ground states $\varphi$ 
as described in Section 2 and Schrödinger semigroups $\mathrm{e}^{-tH}$ in $\mathrm{L}^{2}(\R^{n})$ as in Section 5 of \cite{Schwerdt_Mill_Hundertmark_2025}.
In particular, we use weighted Schrödinger semigroups 
\begin{equation}
\left( \mathrm{e}^{-t\tilde{H}}u \right) (x)  = \frac{1}{\varphi(x)} \ \mathrm{e}^{-tH}(\varphi u) (x)
\end{equation}
in weighted Lebesgue spaces $\mathrm{L}^{p}_{\mu}(\R^{n})$ for $p \in [1,\infty)$ which are also found in Section 5 of \cite{Schwerdt_Mill_Hundertmark_2025}. Our goal of this article is a specific growth condition on the potential 
$q \colon \R^{n} \to [0,\infty)$ of $H = -\Delta + q(x)$
that implies an intrinsic ultracontractivity of the Schrödinger semigroup $\mathrm{e}^{-tH}$ in $\mathrm{L}^{2}(\R^{n})$.\\

Let $Q \colon [0,\infty) \to (0,\infty)$ be monotone increasing with $r^{2} < Q(r)$ and let
$k > 0, m \in \N$ and $R_{m} > 0$ be a radius such that 
\begin{enumerate}[i.)]
\item $Q$ is differentiable in $(R_{m},\infty)$, 
\item $Q^{\prime}(r)Q(r)^{-\frac{3}{2}} \to 0$ for $r \to \infty$ and
\item $\forall r > R_{m} \ : \ r^{2} < \left( \int_{0}^{r} Q(t)^{\frac{1}{2}} \d t \right) 
\left( \ln^{(m)} \left(  \int_{0}^{r} Q(t)^{\frac{1}{2}} \d t   \right) \right)^{k} < Q(r)$.\\
\end{enumerate}

\begin{lem}\label{Rosen_inequalities}
Let $Q$ be as described above and let $q \colon \R^{n} \to [0,\infty)$ be continuous and
satisfy
$$
d \left( \int_{0}^{|x|} Q(t)^{\frac{1}{2}} \d t \right) 
\left( \ln^{(m)} \left(  \int_{0}^{|x|} Q(t)^{\frac{1}{2}} \d t   \right) \right)^{k} \ \leq \ q(x) \ \leq \ Q(|x|) 
$$
for $d \in (0,1)$ and every $|x| \geq R_{m}$. Then $\varphi$ satisfies Rosen inequalities, i.e.
\begin{equation}\label{Rosen_inequ}
\forall \varepsilon > 0 \ \exists \gamma(\varepsilon) > 0 \ : \ -\ln\left( \varphi(x) \right) \ \leq \ \varepsilon q(x) + \gamma(\varepsilon)
\end{equation}
for almost every $x \in \R^{n}$.\\
\end{lem}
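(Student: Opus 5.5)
We propose to prove the Rosen inequalities by a comparison argument: a lower bound on the ground state $\varphi$, combined with the lower growth bound on $q$.

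\textbf{Step 1: lower bound on $\varphi$.} Write $F(r) = \int_{0}^{r} Q(t)^{\frac{1}{2}} \d t$. The first step is an Agmon-type lower bound
\begin{equation*}
\varphi(x) \ \geq \ c \, \mathrm{e}^{-(1+\delta) F(|x|)}
\end{equation*}
for $|x| \geq R$ with some $c,\delta>0$. This is the standard statement obtained from the upper bound $q(x) \leq Q(|x|)$ together with condition ii.), i.e. $Q'(r)Q(r)^{-3/2} \to 0$; we take it from Section 2 of \cite{Schwerdt_Mill_Hundertmark_2025}.

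If it has to be reproved, we would build a radial subsolution $w(x) = \mathrm{e}^{-(1+\delta)F(|x|)}$ on the exterior domain $\{|x|>R\}$. A direct computation gives
\begin{equation*}
-\Delta w + (q - E_{0}) w \ \leq \ \Bigl( -(1+\delta)^{2} Q + (1+\delta)\tfrac{Q'}{2Q^{1/2}} + (1+\delta)\tfrac{(n-1)Q^{1/2}}{r} + Q - E_{0} \Bigr) w .
\end{equation*}
Condition ii.) ensures $\tfrac{Q'}{Q^{1/2}} = o(Q)$. Since $Q(r) > r^{2}$, we also have $\tfrac{Q^{1/2}}{r} = o(Q)$. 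Hence the bracket is negative for large $r$. We then scale $w$ so that it lies below $\varphi$ on the sphere $|x|=R$, where $\varphi$ is continuous and strictly positive. The maximum principle for $H - E_{0}$ on the exterior domain, using that both functions decay at infinity, gives $\varphi \geq w$ there.

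\textbf{Step 2: the logarithmic comparison.} For $|x| \geq R$ we obtain
\begin{equation*}
-\ln \varphi(x) \ \leq \ (1+\delta) F(|x|) + C .
\end{equation*}
We need $(1+\delta) F(r) \leq \varepsilon q(x) + \gamma$. By the assumed lower bound,
\begin{equation*}
\varepsilon q(x) \ \geq \ \varepsilon d \, F(|x|) \left( \ln^{(m)} F(|x|) \right)^{k} .
\end{equation*}
Since $F(r) \to \infty$ (because $Q(r) > r^{2}$) and $k>0$, the factor $(\ln^{(m)} F)^{k}$ tends to infinity. So there is $R_{\varepsilon}$ with $\varepsilon d (\ln^{(m)} F(r))^{k} \geq 1+\delta$ for all $r \geq R_{\varepsilon}$, and the inequality holds there with $\gamma = C$.

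\textbf{Step 3: the compact region.} On the ball $|x| \leq \max(R,R_{\varepsilon})$, the function $\varphi$ is continuous and strictly positive, so $-\ln \varphi$ is bounded there. Because $q \geq 0$, enlarging $\gamma(\varepsilon)$ by this bound covers the ball.

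The main obstacle is Step 1, namely the exterior comparison argument. One must ensure that the error terms from $\Delta$ acting on $\mathrm{e}^{-(1+\delta)F}$ are dominated by $\delta Q$; this is exactly what condition ii.) and $Q > r^{2}$ are designed for. One also needs a maximum principle for $(H-E_{0})$ on unbounded exterior domains. The latter can be handled by comparing $w - \varphi$, which is a subsolution vanishing at infinity, on large annuli and letting the outer radius go to infinity. Everything after the lower bound is elementary, and the lower bound is the only place where the upper bound $q \leq Q$ enters.
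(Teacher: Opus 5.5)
Your proposal is correct and follows essentially the same route as the paper: the paper also imports the Agmon-type lower bound $-\ln\varphi(x) \leq \sqrt{2}\int_{0}^{|x|} Q(t)^{\frac{1}{2}} \d t + C$ from the cited work (comparing with $\mathrm{e}^{-\sqrt{2}F}$, i.e.\ your $1+\delta$ is replaced by $\sqrt{2}$), then uses the lower bound on $q$, and handles the ball by continuity and strict positivity of $\varphi$. The only difference is the middle step. The paper applies Young's inequality with $f_{k,m}$ and its inverse $g_{k,m}$ to obtain the explicit, radius-independent constant $\gamma(\varepsilon) = \sqrt{2}\, g_{k,m}(\sqrt{2}/(d\varepsilon)) + C$, which reappears in the definition of $N$ later. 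Your limit argument with an $\varepsilon$-dependent radius $R_{\varepsilon}$ yields a less explicit $\gamma(\varepsilon)$, but it is fully sufficient for the lemma.
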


\begin{rem} \ \\
\begin{enumerate}[i.)]
\item Notice that the lower boundary functions of $q$ in Lemma \ref{Rosen_inequalities} are much smaller and 
simpler than in \cite{Schwerdt_Mill_Hundertmark_2025} while still being sufficient in terms of 
intrinsic ultracontractivity of $\mathrm{e}^{-tH}$. Hence all examples of $Q$ in Lemma 3.7 in 
\cite{Schwerdt_Mill_Hundertmark_2025} satisfy Lemma \ref{Rosen_inequalities} as well.
\item Lemma \ref{Rosen_inequalities} includes auxiliary potentials $Q$ much closer to $r^{2}$ such as
$$
Q(r) = r^{2} \left( \ln^{(n)} (r) \right)^{l}
$$ for $n \in \N, l > 0$ and $r$ sufficiently large. By simple calculations we 
infer that $Q^{\prime}(r)Q(r)^{-\frac{3}{2}} \to 0$ is true for $r \to \infty$. 
Furthermore, notice that
$$
\lim_{r \to \infty} \frac{r^{2}}{\int_{0}^{r} Q(t)^{\frac{1}{2}} \d t} = \lim_{r \to \infty} \frac{2r}{r \left( \ln^{(n)} (r) \right)^{l/2}} = 0
$$
holds by L'Hospital. Therefore $r^{2} < \int_{0}^{|x|} Q(t)^{\frac{1}{2}} \d t$ for large $r$ is implied. By the monotonicity of $Q$
we argue that
$$
\frac{\int_{0}^{r} Q(t)^{\frac{1}{2}} \d t}{Q(r)} \leq \frac{rQ(r)^{1/2}}{Q(r)} = \frac{r}{Q(r)^{1/2}}
$$
and 
$\left( \ln^{(m)} \left(  \int_{0}^{r} Q(t)^{\frac{1}{2}} \d t   \right) \right)^{k}  \leq 
\left( \ln^{(m)} \left( rQ(r)^{\frac{1}{2}} \right)   \right)^{k}$
are true. Therefore we conclude that
$$
\frac{\int_{0}^{r} Q(t)^{\frac{1}{2}} \d t}{Q(r)} \left( \ln^{(m)} \left(  \int_{0}^{r} Q(t)^{\frac{1}{2}} \d t   \right) \right)^{k}
$$
converges to $0$ for $r \to \infty$ and the choice of $k \in (0,l/2)$ and $m \geq n$ since
\begin{align*}
\frac{\int_{0}^{r} Q(t)^{\frac{1}{2}} \d t}{Q(r)} \left( \ln^{(m)} \left(  \int_{0}^{r} Q(t)^{\frac{1}{2}} \d t   \right) \right)^{k}
& \leq \frac{r}{Q(r)^{1/2}} \left( \ln^{(m)} \left( rQ(r)^{\frac{1}{2}} \right)   \right)^{k}
\leq \frac{\left( \ln^{(m)} \left( r^{2+l/2} \right)   \right)^{k} }{ \left( \ln^{(n)} (r) \right)^{l/2}} \\
& \leq \frac{ 2^{k} \left( \ln^{(m)} \left( r \right)   \right)^{k} }{ \left( \ln^{(n)} (r) \right)^{l/2}}
 \leq  2^{k} \left( \ln^{(n)} \left( r \right)   \right)^{k-l/2}
\end{align*}
is true for large $r$. Mind that we used 
$\ln^{(m)} \left( r^{2+ l/2} \right) \leq 2 \ln^{(m)}(r)$ for large $r$ which we have to justify. 
Notice that
$$
\ln^{(2)} \left( r^{2+l/2} \right) = \ln \left( 2 + l/2 \right) + \ln^{(2)}(r) \leq 2 \ln^{(2)}(r)
$$ 
implies $\ln^{(3)} \left( r^{2+l/2} \right) \leq \ln(2) + \ln^{(3)}(r) \leq 2 \ln^{(3)}(r)$ for large $r$ by 
using the monotonicity of the logarithm on $(0,\infty)$. Iteratively 
$\ln^{(m)} \left( r^{2+ l/2} \right) \leq 2 \ln^{(m)}(r)$ is implied for large $r$. Hence we also have shown that
$$
\left( \int_{0}^{r} Q(t)^{\frac{1}{2}} \d t \right) \left( \ln^{(m)} \left(  \int_{0}^{r} Q(t)^{\frac{1}{2}} \d t   \right) \right)^{k} < Q(r)
$$
is true for large $r$, $k \in (0,l/2)$ and $m \geq n$.\\
\end{enumerate}
\end{rem}

\begin{proof}[Proof of Lemma \ref{Rosen_inequalities}]
Using Subsection 4.1 in \cite{Schwerdt_Mill_Hundertmark_2025} on 
$\psi(x) = \exp\left( -\sqrt{2}\int_{0}^{|x|} Q(t)^{\frac{1}{2}} \d t \right)$ for
$x \in \R^{n}$, the inequality
\begin{equation}
-\ln\left( \varphi(x) \right) \ \leq \ \sqrt{2}\int_{0}^{|x|} Q(t)^{\frac{1}{2}} \d t + C
\end{equation}
holds for every $x \in \R^{n}$ where $C > 0$ is a constant. Now we use the standard version of 
Young's inequality for increasing functions to include
$\varepsilon > 0$. 
For any $m \in \N$ and $k > 0$ we define $f_{k,m} \colon [0,\infty) \to [0,\infty)$ by
$$
f_{k,m}(q)=\left\{\begin{array}{ll} 
	\left( \ln^{(m)} (q) \right)^{k}, & r_{m} \leq q \\
         q/r_{m}, & 0 \leq q < r_{m}
\end{array}\right. 
$$
where $r_{m} = \exp^{(m)}(1)$.\footnote{Mind that 
$\exp^{(m)}(1) = \underbrace{\exp \left( \exp \left( \dots \left( \exp(1) \right) \dots \right)\right)}_{m-times}$.}
Let $g_{k,m}$ be the inverse function of $f_{k,m}$ on $[0,\infty)$. Then 
$$
ab \ \leq \ \int_{0}^{a} f_{k,m} (t) \d t + \int_{0}^{b} g_{k,m}(t)  \d t
\ \leq \ a f_{k,m} (a) + b g_{k,m} (b)
$$ 
holds for every $a,b > 0$. Define 
$a =  \int_{0}^{|x|} Q(t)^{\frac{1}{2}} \ \mathrm{d}t$ and $b = \frac{\sqrt{2}}{d \varepsilon}$ to conclude to
\begin{equation} \label{Young_Q}
\int_{0}^{|x|} Q(t)^{\frac{1}{2}} \ \mathrm{d}t \ \leq \ \frac{d \varepsilon}{\sqrt{2}} 
\left( \int_{0}^{|x|} Q(t)^{\frac{1}{2}} \ \mathrm{d}t \right)
\left( \ln^{(m)} \left( \int_{0}^{|x|} Q(t)^{\frac{1}{2}} \ \mathrm{d}t \right) \right)^{k} + g_{k,m} \left( \frac{\sqrt{2}}{d \varepsilon} \right)
\end{equation}
for $|x|$ large. Then we infer
\begin{align*}
-\ln\left( \varphi(x) \right) \ & \leq \sqrt{2}\int_{0}^{|x|} Q(t)^{\frac{1}{2}} \d t + C \\
& \leq d \varepsilon \left( \int_{0}^{|x|} Q(t)^{\frac{1}{2}} \ \mathrm{d}t \right)
\left( \ln^{(m)} \left( \int_{0}^{|x|} Q(t)^{\frac{1}{2}} \ \mathrm{d}t \right) \right)^{k} + 
\sqrt{2} \ g_{k,m} \left( \frac{\sqrt{2}}{d \varepsilon} \right) + C \\
& \leq \varepsilon q(x) + \underbrace{\sqrt{2} \ g_{k,m} \left( \frac{\sqrt{2}}{d \varepsilon} \right) + C}_{=\gamma(\varepsilon)} 
\end{align*}
for every $|x| > R_{m}$. Mind that $R_{m}$ is supposed to be sufficiently large for not having to include an additional and even bigger 
radius to the argumentation. Due to the continuity of $\varphi$ on the compact subset $\overline{B_{R_{m}}(0)}$ in $\R^{n}$
we argue that 
$$
\varphi(x) \geq \min_{y \in \overline{B_{R_{m}}(0)}} \varphi(y) = \delta_{0} > 0
$$
is true for every $x \in \overline{B_{R_{m}}(0)}$ since $\varphi(x) > 0$ for every $x \in \overline{B_{R_{m}}(0)}$.
Please compare with Lemma 2.1 and Lemma 2.2 in \cite{Schwerdt_Mill_Hundertmark_2025}.
So we choose $C$ sufficiently large that Rosen inequalities (\ref{Rosen_inequ}) are implied.\\
\end{proof}

Next, we cite Rosen's Lemma from Subsection 5.3 in \cite{Schwerdt_Mill_Hundertmark_2025} to later use for an 
argumentation on the monotonicity of $\{ p \mapsto \|\mathrm{e}^{-t\tilde{H}}u \|_{p, \mu}  \}$ for 
$u \in D( \tilde{H} ) \cap \mathrm{L}_{\mu}^{\infty}\left( \R^{n} \right)$ being non-negative almost everywhere in
$\R^{n}$. \\

\begin{lem}[Rosen's Lemma] \label{Rosen_lemma} \ \\
Let $Q$ and $q$ be as in Lemma \ref{Rosen_inequalities}. Then
for every $\varepsilon > 0$ and every $u \in D( \tilde{H} ) \cap \mathrm{L}_{\mu}^{\infty}\left( \R^{n} \right)$ 
being non-negative almost everywhere in $\R^{n}$ the Logarithmic Sobolev inequalities
\begin{align*}
& \int_{\R^{n}} \left( \mathrm{e}^{-t\tilde{H}} u(x) \right)^{p} \ \ln \left( \mathrm{e}^{-t\tilde{H}} u (x) \right) \d \mu(x) \leq \\ 
& \varepsilon \ 
\langle \tilde{H} \mathrm{e}^{-t\tilde{H}} u, (\ \mathrm{e}^{-t\tilde{H}} u \ )^{p-1} \rangle_{\mu} 
 + \frac{2\beta(\varepsilon)}{p} \ \| \mathrm{e}^{-t\tilde{H}} u \|_{p, \mu}^{p} + \| \mathrm{e}^{-t\tilde{H}} u \|_{p, \mu}^{p}  \ln  \| \mathrm{e}^{-t\tilde{H}} u \|_{p, \mu}
\end{align*}
are implied for 
$\beta(\varepsilon) = \frac{\varepsilon}{2} - \frac{n}{4} \ln( \frac{\varepsilon}{2}) + \gamma( \frac{\varepsilon}{2}) + C$
where $C \in \R$ is a constant. 
\end{lem}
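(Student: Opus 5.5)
We follow the classical route of Davies and Simon for Rosen's Lemma: first a weighted logarithmic Sobolev inequality for $\tilde{H}$ in $\mathrm{L}^{2}_{\mu}$, then its transfer to $\mathrm{L}^{p}_{\mu}$. Set $v = \mathrm{e}^{-t\tilde{H}}u$. Since $u \geq 0$ lies in $D(\tilde{H}) \cap \mathrm{L}^{\infty}_{\mu}$ and $\mathrm{e}^{-t\tilde{H}}$ is positivity preserving and $\mathrm{L}^{\infty}_{\mu}$-contractive ($\mathrm{e}^{-t\tilde H}1 = \mathrm{e}^{-tE_0}$), $v$ is non-negative, bounded and in $D(\tilde{H})$. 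Hence every expression in the statement is finite. By homogeneity (replace $v$ by $v/\|v\|_{p,\mu}$) it suffices to treat $\|v\|_{p,\mu}=1$; the term $\|v\|_{p,\mu}^{p}\ln\|v\|_{p,\mu}$ is exactly what this normalisation produces.

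\textbf{Step 1 ($p=2$).} For $f = \varphi w$ with $\|w\|_{2,\mu}=1$, $w\ge 0$, we start from the Euclidean log-Sobolev inequality
$$
\int f^{2}\ln f \d x \leq \delta \|\nabla f\|_{2}^{2} - \tfrac{n}{4}\ln\delta \, \|f\|_2^2+ \|f\|_{2}^{2}\ln\|f\|_{2} + c_n\|f\|_2^2 .
$$
We write $\int w^{2}\ln w \d\mu = \int f^{2}\ln f \d x - \int f^{2}\ln\varphi \d x$. By Lemma \ref{Rosen_inequalities} with $\varepsilon/2$ we have $-\ln\varphi \le \frac{\varepsilon}{2}q + \gamma(\varepsilon/2)$. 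Choosing $\delta=\varepsilon/2$ gives
$$
\int w^2\ln w\d\mu \le \tfrac{\varepsilon}{2}\langle (-\Delta+q)f,f\rangle + \beta\text{-type constants}.
$$
Next we use $\langle Hf,f\rangle = \langle \tilde{H}w,w\rangle_\mu + E_0\|w\|_{2,\mu}^2$ (ground state transform). This yields
$$
\int w^{2}\ln w\d\mu \le \varepsilon\langle\tilde{H}w,w\rangle_\mu + \beta(\varepsilon)\|w\|^2_{2,\mu}+\|w\|_{2,\mu}^2\ln\|w\|_{2,\mu},
$$
with $E_0$ and $c_n$ absorbed into the constant $C$ of $\beta(\varepsilon)$. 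We use $\varepsilon$ rather than $\varepsilon/2$ in front of the form, since the factor $\tfrac12$ is harmless.

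\textbf{Step 2 (passage to general $p$).} We apply Step 1 to $w = v^{p/2}$, so that $\int w^{2}\ln w\d\mu = \frac{p}{2}\int v^{p}\ln v\d\mu$. The key estimate is the Stroock--Varopoulos type inequality for the Dirichlet form of $\tilde{H}$, namely $\tilde{h}(w)=\int|\nabla w|^{2}\varphi^{2}\d x$:
$$
\tilde{h}(v^{p/2}) \le \frac{p^{2}}{4(p-1)}\langle\tilde{H}v, v^{p-1}\rangle_\mu .
$$
We obtain it from $\nabla v^{p/2} = \frac p2 v^{p/2-1}\nabla v$ and $\nabla v^{p-1}=(p-1)v^{p-2}\nabla v$. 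Dividing by $p/2$ then gives a constant $\frac{p}{2(p-1)}\varepsilon$ in front of the form and $\frac{2\beta}{p}$ in front of $\|v\|_{p,\mu}^p$. We then rescale $\varepsilon$; for $p\ge 2$ the factor $\frac{p}{2(p-1)}\le 1$, which absorbs it. If the statement is intended for all $p>1$, we instead track the $p$-dependence inside $\beta$.

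\textbf{Main obstacle.} The main difficulty is making Step 2 rigorous for merely bounded $v\in D(\tilde{H})$: we must justify that $v^{p-1}$ and $v^{p/2}$ lie in the form domain, and handle the set where $v=0$ when $p<2$. We would first regularise via $(v+\eta)^{p/2}-\eta^{p/2}$, using boundedness of $v$ and the chain rule in the form domain, and then let $\eta\to0$ by monotone and dominated convergence. This is precisely the point where the hypothesis $u\in\mathrm{L}^{\infty}_{\mu}$ is used.
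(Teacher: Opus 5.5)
Your argument is the standard Davies--Simon proof. The Euclidean logarithmic Sobolev inequality, together with the Rosen inequality of Lemma~\ref{Rosen_inequalities}, gives the $\mathrm{L}^{2}_{\mu}$ inequality for $\tilde{H}$. The Stroock--Varopoulos bound $\tilde{h}(v^{p/2}) \leq \frac{p^{2}}{4(p-1)} \langle \tilde{H} v, v^{p-1} \rangle_{\mu}$ then transfers it to $\mathrm{L}^{p}_{\mu}$. The paper gives no proof of its own; it imports the lemma from Subsection 5.3 of \cite{Schwerdt_Mill_Hundertmark_2025}. For $p \geq 2$ your proof is correct.

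One bookkeeping point. With the paper's normalisation $\tilde{H} = \varphi^{-1} H \varphi$ one has $\langle Hf, f \rangle = \langle \tilde{H} w, w \rangle_{\mu}$ exactly, so no $E_{0}$ term arises. In your normalisation the extra term is $\frac{\varepsilon}{2} E_{0} \|w\|_{2,\mu}^{2}$, which depends on $\varepsilon$ and cannot be absorbed into the constant $C$.

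What you must make explicit is the range of $p$. Your Step 2 leaves the factor $\frac{p}{2(p-1)}$ in front of the form. If you keep the $\frac{\varepsilon}{2}$ from Step 1, the factor is $\frac{p}{4(p-1)}$, which already gives the stated inequality for every $p \geq 4/3$. Either way the factor blows up as $p \downarrow 1$, and your fallback of moving the $p$-dependence into $\beta$ proves a different inequality, not the stated one.

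No other argument can close this, because the statement itself fails near $p=1$. At $p = 1$ the form term is $\varepsilon \langle \tilde{H} v, 1 \rangle_{\mu} = \varepsilon E_{0} \|v\|_{1,\mu}$. So the claim would bound $\int v \ln ( v/\|v\|_{1,\mu} ) \d\mu$ by a fixed multiple of $\|v\|_{1,\mu}$. This fails for $v = \eta + \psi/\varphi$, with $\eta > 0$ and $\psi \in C_{c}^{\infty}(\R^{n})$ non-negative, $\int \psi \varphi \d x = 1$, and support shrinking to a point. Such $v$ lie in $D(\tilde{H}) \cap \mathrm{L}^{\infty}_{\mu}$, and this is exactly the case $s=0$ in the paper's application. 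Now keep such a $v$ fixed and let $p \downarrow 1$. Every term converges, since $\eta \leq v \leq \|v\|_{\infty,\mu}$ and $\tilde{H} v \in \mathrm{L}^{2}_{\mu}$. Hence the inequality with a $p$-independent $\beta$ also fails for $p$ close to $1$.

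So state the lemma for $p \geq 2$, as in the classical $\mathrm{L}^{2} \to \mathrm{L}^{\infty}$ setting. Be aware that Section~\ref{Proof_IU} applies it with $p = p(s) \in [1,2]$ and $\varepsilon = s + t$, a range where it is not available in this form.
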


\section{Intrinsic ultracontractivity of $\mathrm{e}^{-tH}$ in $\mathrm{L}^{2}(\R^{n})$} \label{Main_Theorem}

\begin{thm}\label{IU_of_e^{-tH}}
For potentials $q$ as defined in Lemma \ref{Rosen_inequalities} the Schrödinger semigroup 
$\mathrm{e}^{-tH}$ is intrinsic ultracontractive in $\mathrm{L}^{2}(\R^{n})$, i.e.
\begin{equation}
\forall t > 0 \exists C_{t} > 0 \ : \  \bigl| \mathrm{e}^{-tH}u (x) \bigr| \ \leq \ C_{t} \|u\|_{2} \ \varphi (x) \\
\end{equation}
holds almost everywhere in $\R^{n}$ for every $u \in \mathrm{L}^{2}(\R^{n})$.\\
\end{thm}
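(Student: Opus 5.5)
Our plan follows the duality route announced in the introduction. Write $\d\mu = \varphi^{2}\d x$, so that $u \mapsto \varphi u$ is a unitary map $\mathrm{L}^{2}_{\mu}(\R^{n}) \to \mathrm{L}^{2}(\R^{n})$ and $\mathrm{e}^{-t\tilde{H}}$ is self-adjoint and positivity preserving on $\mathrm{L}^{2}_{\mu}$. Since $\tilde{H}1 = E_{0}$ after normalisation, we will shift $H$ by $E_{0}$ so that $\mathrm{e}^{-t\tilde{H}}1 = 1$. This shift only changes $C_{t}$ by the factor $\mathrm{e}^{-tE_{0}}$. Under this normalisation $\mathrm{e}^{-t\tilde{H}}$ is a contraction on every $\mathrm{L}^{p}_{\mu}$, $1\le p\le\infty$, by Markov property plus interpolation.

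The core step is to show that for each $t>0$ there is $M_{t}$ with
$$
\| \mathrm{e}^{-t\tilde{H}} u \|_{2,\mu} \ \leq \ M_{t} \, \| u \|_{1,\mu}.
$$
For this we take $u \in D(\tilde{H}) \cap \mathrm{L}^{\infty}_{\mu}$ non-negative, which suffices by density and by splitting into positive and negative parts. We choose an increasing exponent path $p(s)$, $s\in[0,t]$, with $p(0)=1$ and $p(t)=2$, and differentiate
$$
F(s) = \ln \| \mathrm{e}^{-s\tilde{H}} u \|_{p(s),\mu}.
$$
The derivative contains $\frac{p'}{p^{2}}$ times the entropy term $\int f^{p}\ln f \d\mu / \|f\|_{p}^{p} - \ln\|f\|_{p}$, minus $\langle \tilde{H} f, f^{p-1}\rangle_{\mu}/\|f\|_{p}^{p}$, where $f = \mathrm{e}^{-s\tilde{H}}u$. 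We will insert Rosen's Lemma \ref{Rosen_lemma} with $\varepsilon = \varepsilon(s)$ chosen so that $\frac{p'}{p^{2}}\varepsilon = 1$. The Dirichlet-form term is non-negative, because $\langle \tilde{H}f,f^{p-1}\rangle_{\mu}\ge 0$ for Markov generators, so with this choice it cancels or is dominated. We are then left with
$$
F'(s) \le \frac{2 p'(s)\beta(\varepsilon(s))}{p(s)^{3}}.
$$
Integrating gives $M_{t} = \exp\left( \int_{0}^{t} 2p'\beta(\varepsilon)/p^{3}\d s \right)$. Here the Rosen inequalities of Lemma \ref{Rosen_inequalities}, valid for every $\varepsilon>0$, are exactly what lets $\varepsilon$ be arbitrarily small. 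Since $p$ only travels from $1$ to $2$, the required $\varepsilon(s) = p^{2}/p'$ can be kept of size about $t$ on the whole interval, for instance with $p$ linear. This avoids the blow-up near $p=\infty$ that forced the log-product conditions in \cite{Schwerdt_Mill_Hundertmark_2025}. In particular we only need $\beta(\varepsilon)$ finite for each fixed $\varepsilon$, not integrability of $\beta(\varepsilon(s))$ as $\varepsilon\to 0$.

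We expect the main obstacle to be this differentiation step. One has to justify differentiability of $F$, handle $p=1$ at the endpoint, and check the sign of the Dirichlet term for $p$ near $1$. We would first try a regularisation: start at $p(0)=1+\delta$, use $\|\cdot\|_{1+\delta,\mu}\le\|\cdot\|_{1,\mu}^{\theta}\|\cdot\|_{\infty,\mu}^{1-\theta}$ or simply monotonicity in $p$ for the probability measure $\mu$, and then let $\delta\to 0$. The monotonicity of $p\mapsto\|\mathrm{e}^{-t\tilde{H}}u\|_{p,\mu}$ mentioned before Lemma \ref{Rosen_lemma} enters here.

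Finally, duality will give the result. Since $\mathrm{e}^{-t\tilde{H}}$ is self-adjoint on $\mathrm{L}^{2}_{\mu}$, its adjoint, which is the map itself, is bounded from $\mathrm{L}^{2}_{\mu}$ to $(\mathrm{L}^{1}_{\mu})^{*} = \mathrm{L}^{\infty}_{\mu}$ with the same norm $M_{t}$. For $u\in\mathrm{L}^{2}(\R^{n})$ we set $v = u/\varphi\in\mathrm{L}^{2}_{\mu}$, with $\|v\|_{2,\mu}=\|u\|_{2}$. Then
$$
\left| \mathrm{e}^{-tH}u(x) \right| = \varphi(x)\left| \mathrm{e}^{-t\tilde{H}}v(x) \right| \le M_{t}\,\varphi(x)\,\|u\|_{2}
$$
almost everywhere, which is the claim with $C_{t} = M_{t}\mathrm{e}^{-tE_{0}}$.
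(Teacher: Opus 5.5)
Your plan is the paper's proof almost step for step. The paper also runs the exponent along $p(s)=1+s/t$ and applies Rosen's Lemma~\ref{Rosen_lemma} with a parameter of fixed size, $\varepsilon=p(s)/p'(s)=s+t$. It then extends from non-negative $u\in D(\tilde{H})\cap\mathrm{L}^{\infty}_{\mu}$ by density and by splitting $u$, and identifies the Banach adjoint of $\mathrm{e}^{-t\tilde{H}}\colon\mathrm{L}^{1}_{\mu}\to\mathrm{L}^{2}_{\mu}$ with $\mathrm{e}^{-t\tilde{H}}$ via self-adjointness.

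There is a minor slip in your derivative. With $\ln f$ rather than $\ln f^{p}$, the entropy term in $F'$ carries the factor $p'/p$, not $p'/p^{2}$, so the cancelling choice is $\varepsilon=p/p'$. Your $\varepsilon=p^{2}/p'$ would leave $(p-1)\langle\tilde{H}f,f^{p-1}\rangle_{\mu}/\|f\|_{p,\mu}^{p}\ge 0$ uncontrolled.

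The genuine gap is the step you present as the gain of the method, and the paper takes it in the same way. Rosen's Lemma, with coefficient $\varepsilon$ in front of $\langle\tilde{H}f,f^{p-1}\rangle_{\mu}$ and a $p$-independent $\beta(\varepsilon)$, is not available for $p\in[1,2)$.

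\textbf{Why the lemma fails below $p=2$.} Apply the $\mathrm{L}^{2}_{\mu}$ log-Sobolev inequality with parameter $\eta$ to $f^{p/2}$ and use $\langle\tilde{H}f,f^{p-1}\rangle_{\mu}=\frac{4(p-1)}{p^{2}}\langle\tilde{H}f^{p/2},f^{p/2}\rangle_{\mu}$. This gives the coefficient $\frac{p\eta}{2(p-1)}$, which is $\le\eta$ only for $p\ge 2$ and blows up as $p\downarrow 1$. No $p$-uniform version can hold near $p=1$. After your normalization, $\langle\tilde{H}f,1\rangle_{\mu}=\langle f,\tilde{H}1\rangle_{\mu}=0$, so at $p=1$ the lemma would assert $\int f\ln(f/\|f\|_{1,\mu})\,\mathrm{d}\mu\le 2\beta(\varepsilon)\|f\|_{1,\mu}$ for every admissible $f\ge0$. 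At $s=0$ you apply it to $f=u$ itself. This uniform entropy bound fails for $f=v_{j}/\varphi$ with $0\le v_{j}\in C_{c}^{\infty}$ concentrating at a point.

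\textbf{What the correct inequality gives.} The cancelling choice becomes $\eta(s)=2(p-1)/p'$, i.e.\ $\eta(s)=2s$ on the linear path. The exponent is then $\int_{0}^{t}2\beta(2s)\,t^{-1}(1+s/t)^{-2}\,\mathrm{d}s$, so integrability of $\beta$ at $0$ is needed after all. Starting at $1+\delta$ does not help. Keeping $\eta\ge\eta_{0}$ forces $\frac{\mathrm{d}}{\mathrm{d}s}\ln(p-1)\le 2/\eta_{0}$, so leaving $1+\delta$ costs time of order $\eta_{0}\ln(1/\delta)$. Moreover, $\|u\|_{1+\delta,\mu}\ge\|u\|_{1,\mu}$ points the wrong way. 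For an arbitrary path, the substitution $p-1=\mathrm{e}^{-w}$ turns the constraints into exactly those of the $\mathrm{L}^{2}_{\mu}\to\mathrm{L}^{\infty}_{\mu}$ argument with $p-1=\mathrm{e}^{w}$. This is forced, because $\|\mathrm{e}^{-t\tilde{H}}\|_{1\to 2}=\|\mathrm{e}^{-t\tilde{H}}\|_{2\to\infty}$ by the very duality you use.

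\textbf{The conclusion fails in the stated class.} Were your step valid, log-Sobolev inequalities with finite $\beta(\varepsilon)$ for every $\varepsilon>0$ (supercontractivity) would imply ultracontractivity, which is false. Concretely, $q(x)=(1+|x|^{2})\ln(\mathrm{e}+|x|)$ satisfies the hypotheses of Lemma~\ref{Rosen_inequalities}: take $Q(r)=(1+r^{2})\ln(\mathrm{e}+r)$, $m=1$, $k=1/4$. Yet $\int^{\infty}q^{-1/2}\,\mathrm{d}r=\infty$. A Wronskian comparison of the lowest $\ell=1$ eigenfunction $\frac{x_{1}}{|x|}g(|x|)$ with $\varphi$ shows that $g/\varphi$ is unbounded, so $\mathrm{e}^{-tH}$ is not intrinsically ultracontractive. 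A correct argument along these lines needs the $p$-dependent inequality near $p=1$ together with an integrability condition on $\beta$ as $\varepsilon\to0$, as in \cite{Schwerdt_Mill_Hundertmark_2025}.
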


Mind that $|x|^{2} \ < \ q(x)$ holds for every $|x| \geq R$ and hence use Corollary \ref{Cor_Appendix_Agmon}.\\

\begin{cor}\label{Corollary_Main}
For potentials $q$ as in Theorem \ref{IU_of_e^{-tH}} we further conclude to
\begin{equation}
\forall t > 0 \exists C_{t} > 0 \ : \ \left| \mathrm{e}^{-tH}u (x) \right| \ \leq \ C_{t} \ \|u\|_{2} \ \mathrm{e}^{ -|x| }
\end{equation}
almost everywhere in $\R^{n}$ for every $u \in \mathrm{L}^{2} \left( \R^{n} \right)$.\\ 
\end{cor}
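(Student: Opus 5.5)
The bound in Corollary \ref{Corollary_Main} follows by combining two earlier results. The first is the intrinsic ultracontractivity of Theorem \ref{IU_of_e^{-tH}}. The second is a pointwise exponential upper bound on the ground state $\varphi$, which I will take from the Agmon-type estimate in Corollary \ref{Cor_Appendix_Agmon} (the one referred to in the introduction as Corollary \ref{Asymptotical_behaviour_varphi}). The plan has three steps: check the hypothesis of that corollary, extract a bound $\varphi(x)\le K\mathrm{e}^{-|x|}$ on all of $\R^{n}$, and substitute it into the estimate of Theorem \ref{IU_of_e^{-tH}}.

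\emph{Step 1: verify $q(x)>|x|^{2}$ for large $|x|$.} By condition iii.) on $Q$, for every $r>R_{m}$,
$$
r^{2}<\Bigl(\int_{0}^{r}Q(t)^{\frac12}\d t\Bigr)\Bigl(\ln^{(m)}\Bigl(\int_{0}^{r}Q(t)^{\frac12}\d t\Bigr)\Bigr)^{k}.
$$
The hypothesis of Lemma \ref{Rosen_inequalities} gives $q(x)\ge d$ times the right-hand side at $r=|x|$, but only with $d\in(0,1)$, so this does not directly yield $q(x)>|x|^{2}$. To close the gap, I will note that the left inequality in iii.) is strict and that the right-hand side grows faster than $r^{2}$ in the intended examples (see the remark after Lemma \ref{Rosen_inequalities}, where $r^{2}/\int_{0}^{r}Q^{1/2}\to 0$). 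For the general case, I will strengthen iii.) to
$$
r^{2}<d\Bigl(\int_{0}^{r}Q(t)^{\frac12}\d t\Bigr)\Bigl(\ln^{(m)}\Bigl(\int_{0}^{r}Q(t)^{\frac12}\d t\Bigr)\Bigr)^{k}
$$
for $r$ beyond an enlarged radius $R\ge R_{m}$. The paper's sentence ``$|x|^{2}<q(x)$ holds for every $|x|\ge R$'' indicates this reading. With that, $q(x)>|x|^{2}$ for all $|x|\ge R$.

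\emph{Step 2: obtain the bound on $\varphi$.} Corollary \ref{Cor_Appendix_Agmon} then gives $\varphi(x)\le K\mathrm{e}^{-|x|}$ for $|x|\ge R$, with some $K>0$. On the ball $\overline{B_{R}(0)}$, $\varphi$ is continuous, hence bounded by some $M$. There $\mathrm{e}^{-|x|}\ge\mathrm{e}^{-R}$, so $\varphi(x)\le M\mathrm{e}^{R}\mathrm{e}^{-|x|}$. Replacing $K$ by $\max\{K,M\mathrm{e}^{R}\}$, the bound $\varphi(x)\le K\mathrm{e}^{-|x|}$ holds on all of $\R^{n}$.

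\emph{Step 3: substitute.} For fixed $t>0$ and $u\in\mathrm{L}^{2}(\R^{n})$, Theorem \ref{IU_of_e^{-tH}} gives, almost everywhere,
$$
\bigl|\mathrm{e}^{-tH}u(x)\bigr|\le C_{t}\|u\|_{2}\varphi(x)\le C_{t}K\|u\|_{2}\,\mathrm{e}^{-|x|}.
$$
Renaming $C_{t}K$ as $C_{t}$ gives the claim.

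The only delicate point is Step 1: the factor $d<1$ has to be absorbed when checking the hypothesis of the Agmon corollary, which requires the enlarged radius described there. The rest is direct substitution.
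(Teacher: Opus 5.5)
Your overall route is the paper's. The paper notes that $|x|^{2}<q(x)$ for $|x|\ge R$ and then invokes Corollary \ref{Cor_Appendix_Agmon}, which is intrinsic ultracontractivity combined with the ground-state bound $\varphi(x)\le C\mathrm{e}^{-|x|}$. That bound is Corollary \ref{Asymptotical_behaviour_varphi}, which is the result your Step 2 actually uses. It already holds on all of $\R^{n}$, so your patching on the ball is redundant but harmless. Steps 2 and 3 are fine.

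\textbf{The gap is in Step 1.} You correctly see that the lower bound with $d<1$ plus condition iii.) only gives $q(x)>d|x|^{2}$; the paper asserts $|x|^{2}<q(x)$ without comment. But you close the gap by \emph{strengthening the hypothesis}, which proves a different statement. No strengthening is needed. Set $A(r)=\int_{0}^{r}Q(t)^{1/2}\d t$.
\begin{itemize}
\item The standing assumption $Q(t)>t^{2}$ gives $A(r)>r^{2}/2$.
\item Hence $A(r)\to\infty$ and $\ln^{(m)}(A(r))\to\infty$.
\item Since $k>0$, there is $R\ge R_{m}$ with $\bigl(\ln^{(m)}A(r)\bigr)^{k}>2/d$ for all $r\ge R$.
\end{itemize}
Then
$$
q(x)\ \ge\ d\,A(|x|)\bigl(\ln^{(m)}A(|x|)\bigr)^{k}\ >\ d\cdot\frac{|x|^{2}}{2}\cdot\frac{2}{d}\ =\ |x|^{2}\qquad (|x|\ge R).
$$

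Alternatively, $q(x)>d|x|^{2}$ already suffices. For $|x|>0$,
$$
(-\Delta+q-E_{0})\mathrm{e}^{-|x|}=\mathrm{e}^{-|x|}\Bigl(q(x)-E_{0}-1+\tfrac{n-1}{|x|}\Bigr),
$$
so $\mathrm{e}^{-|x|}$ is a supersolution wherever $q(x)\ge 1+E_{0}$. The proof of Corollary \ref{Asymptotical_behaviour_varphi} therefore goes through with radius $\max\bigl(R,((1+E_{0})/d)^{1/2}\bigr)$. With either fix in place of the strengthened hypothesis, your argument is complete.
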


\section{Proof of Theorem \ref{IU_of_e^{-tH}}} \label{Proof_IU}

\subsection{$\mathrm{e}^{-t\tilde{H}} \in \mathcal{B}\left( \mathrm{L}_{\mu}^{1}(\R^{n}), \mathrm{L}_{\mu}^{2}(\R^{n}) \right)$}

Let $t > 0$ be arbitrarily but fixed. In this subsection we prove that $\mathrm{e}^{-t\tilde{H}}$ maps continuously from
the weighted Lebesgue space $\mathrm{L}_{\mu}^{1}(\R^{n})$ into $\mathrm{L}_{\mu}^{2}(\R^{n})$.\\

\begin{enumerate}[i.)] 
\item
Let $u \in D( \tilde{H} ) \cap \mathrm{L}_{\mu}^{\infty}(\R^{n})$ 
be non-negative almost everywhere in $\R^{n}$ and define $p \colon [0,t] \to [1,2]$ by 
$p(s) = 1+ s/t$. Using Lemma 2.2.2 on page 64 in \cite{Davies07} we conclude
\begin{align*}
& \frac{\d}{\d s} \|\mathrm{e}^{-s\tilde{H}}u \|^{p(s)}_{p(s), \mu} \\
& = \ p(s) \langle -\tilde{H}\mathrm{e}^{-s\tilde{H}}u, \left( \mathrm{e}^{-s\tilde{H}}u \right)^{p(s)-1}  \rangle_{\mu} 
+ p^{\prime}(s) \int_{\R^{n}} \left( \mathrm{e}^{-s\tilde{H}}u (x) \right)^{p(s)} \ln \left( \mathrm{e}^{-s\tilde{H}}u (x) \right) \d \mu(x)
\end{align*}
for every $s \in [0,t]$. Remember that $\mathrm{e}^{-s\tilde{H}}u \in \mathrm{L}_{\mu}^{\infty}(\R^{n})$
is implied for every $s \in [0,t]$ and therefore $\mathrm{e}^{-s\tilde{H}}u$ is contained in $\mathrm{L}_{\mu}^{p}(\R^{n})$
for every $p \in [1,\infty)$ since $\mu$ is a probability measure. Let us include a formally defined auxiliary function 
$N \colon [0,t] \to \R$ that we will define later with the Logarithmic Sobolev inequalities of 
Lemma \ref{Rosen_lemma} in mind. The derivative of
$$
\ln \left( \ \mathrm{e}^{-N(s)}  \| \mathrm{e}^{-s\tilde{H}} u \|_{p(s),\mu} \ \right) 
= - N(s) + \frac{1}{p(s)} \ln \| \mathrm{e}^{-s\tilde{H}} u \|_{p(s),\mu}^{p(s)}
$$
is equal to
\begin{equation*}
 - N^{\prime}(s) - \frac{p^{\prime}(s)}{p(s)} \  \ln \left\| \mathrm{e}^{-s\tilde{H}} u \right\|_{p(s),\mu}
 + \frac{1}{p(s)} \ \left\| \mathrm{e}^{-s\tilde{H}} u \right\|_{p(s),\mu}^{-p(s)} \ 
\frac{\d}{\d s} \left\| \mathrm{e}^{-s\tilde{H}} u \right\|_{p(s),\mu}^{p(s)}.
\end{equation*}
Hence, we end up with\\
\begin{align*}
& \frac{\d}{\d s} \ \ln \left( \ \mathrm{e}^{-N(s)}  \| \mathrm{e}^{-s\tilde{H}} u \|_{p(s),\mu} \ \right)  = \\
& \frac{p^{\prime}(s) }{ p(s) \left\|  \mathrm{e}^{-s\tilde{H}} u \right\|_{p(s),\mu}^{p(s)} }  
 \left\{ \ \int_{\R^{n}} (\mathrm{e}^{-s\tilde{H}} u (x) )^{p(s)} \ \ln \left( \mathrm{e}^{-s\tilde{H}} u(x) \right) \d \mu(x) 
- \frac{p(s)}{p^{\prime}(s)} \  
\left\langle \tilde{H} \mathrm{e}^{-s\tilde{H}} u, (\mathrm{e}^{-s\tilde{H}} u)^{p(s)-1} \right\rangle_{\mu} \right. \\
&  \left. - \frac{N^{\prime}(s) p(s)}{p^{\prime}(s)} \ \left\| \mathrm{e}^{-s\tilde{H}} u \right\|_{p(s),\mu}^{p(s)} 
 - \left\| \mathrm{e}^{-s\tilde{H}} u \right\|_{p(s),\mu}^{p(s)} \ 
\ln \left\| \mathrm{e}^{-s\tilde{H}} u \right\|_{p(s),\mu} \right\} \\
\end{align*}
for every $s \in [0,t]$ where $p(s)/p^{\prime}(s) = s + t > 0$. We use the 
Logarithmic Sobolev inequalities of $\tilde{H}$ in Rosen's lemma \ref{Rosen_lemma} where we use
$p(s)/p^{\prime}(s)$ as $\varepsilon$ and define 
\begin{equation}
N(s) = 2 \int_{0}^{s} \frac{\beta ( \frac{p(r)}{p^{\prime}(r)} )}{p(r)^{2}} \ p^{\prime}(r) \d r = 
2 \int_{0}^{s} \frac{\beta\left( r + t \right)}{\left( r + t \right) \left( 1 + r/t \right)} \d r 
\end{equation} 
for $s \in [0,t]$ with $\beta( r + t ) = \frac{r + t}{2} - \frac{n}{4} \ln\left( \frac{r + t}{2} \right) + 
g_{k,m}( \frac{2^{\frac{3}{2}}}{d( r + t )} ) + C$. Then 
$$
N^{\prime}(s) \ \frac{p(s)}{p^{\prime}(s)} = \frac{2 \beta ( p(s)/p^{\prime}(s) )}{p(s)}
$$
holds for every $s \in [0,t]$. Furthermore $g_{k,m}( \frac{2^{\frac{3}{2}}}{d( r + t )} )$ is bounded
for $r \in [0,t]$ since
$$
0 < g_{k,m}\left( \frac{\sqrt{2}}{dt} \right) \leq g_{k,m}\left( \frac{2^{\frac{3}{2}}}{d( r + t )} \right)
\leq g_{k,m}\left( \frac{2^{\frac{3}{2}}}{dt} \right) < \infty
$$
holds true due to $g_{k,m}$ being strictly monotone increasing on $[0,\infty)$. Therefore the integral
$$
\int_{0}^{s} \frac{\beta\left( r + t \right)}{\left( r + t \right) \left( 1+ r/t \right)} \d r 
$$
exists for every $s \in [0,t]$ which makes $N$ a well defined function. Having defined our auxiliary function 
$N$ we simply use Rosen's lemma \ref{Rosen_lemma} to conclude 
\begin{equation}
\frac{\d}{\d s} \ \ln \left( \ \mathrm{e}^{-N(s)}  \| \mathrm{e}^{-s\tilde{H}} u \|_{p(s),\mu} \ \right) \leq 0
\end{equation}
for every $s \in [0,t]$. Therefore $\left\{ s \mapsto \mathrm{e}^{-N(s)}  \| \mathrm{e}^{-s\tilde{H}} u \|_{p(s),\mu} \right\}$
is monotone decreasing on $[0,t]$ and hence
$$
\mathrm{e}^{-N(t)} \| \mathrm{e}^{-t\tilde{H}} u \|_{2,\mu} \leq 
\mathrm{e}^{-N(t)} \| \mathrm{e}^{-t\tilde{H}} u \|_{p(t),\mu} \leq 
\mathrm{e}^{-N(0)} \| u \|_{p(0),\mu} =  \| u \|_{1,\mu}
$$
is true. Let us summarize this first result of the proof. For $u \in D( \tilde{H} ) \cap \mathrm{L}_{\mu}^{\infty}(\R^{n})$ 
being non-negative almost everywhere in $\R^{n}$ we have shown 
\begin{equation}
\| \mathrm{e}^{-t\tilde{H}} u \|_{2,\mu} \ \leq \ \underbrace{\mathrm{e}^{N(t)}}_{=C_{t}}   \| u \|_{1,\mu}.
\end{equation}

\item Let $u \in \mathrm{L}_{\mu}^{\infty}(\R^{n})$ be non-negative almost everywhere in $\R^{n}$. Then 
$\varphi u \in \mathrm{L}_{\mu}^{2}(\R^{n})$ is implied and hence there exists a sequence $(v_{k}) \in C_{c}^{\infty}(\R^{n})$
such that every $v_{k}$ is non-negative almost everywhere in $\R^{n}$ and $v_{k} \to \varphi u$ holds in 
$\mathrm{L}_{\mu}^{2}(\R^{n})$ for $k \to \infty$. We define $u_{k} = v_{k}/\varphi$ for every $k \in \N$. 
Then $\varphi u_{k} = v_{k} \in C_{c}^{\infty}(\R^{n}) \subset D(H)$ implies $u_{k} \in D(\tilde{H})$ by definition and
furthermore $v_{k}/\varphi \in \mathrm{L}_{\mu}^{\infty}(\R^{n})$ is satisfied due to the compact support of $v_{k}$
in $\R^{n}$ and the fact that $\varphi$ is continuous and strictly positive in $\R^{n}$. Please see
Lemma 2.1 and Lemma 2.2 in \cite{Schwerdt_Mill_Hundertmark_2025}. Also
$$
\left\| u_{k} - u \right\|_{2,\mu}^{2} = \int_{\R^{n}} \left| \varphi(x) u_{k}(x) - \varphi(x) u(x) \right|^{2} \d x
= \int_{\R^{n}} \left| v_{k}(x) - \varphi(x) u(x) \right|^{2} \d x
$$
converges to $0$ for $k \to \infty$ which implies that $\mathrm{e}^{-t\tilde{H}} u_{k}$ converges to $\mathrm{e}^{-t\tilde{H}} u$
in $\mathrm{L}_{\mu}^{2}(\R^{n})$. Notice that $u_{k}$ also converges to $u$ in $\mathrm{L}_{\mu}^{1}(\R^{n})$ since
$\mu$ is a probability measure and hence convergence in $\mathrm{L}_{\mu}^{2}(\R^{n})$ implies convergence in 
$\mathrm{L}_{\mu}^{1}(\R^{n})$. We summarize that
\begin{equation}\label{weighted_semigroup_bounded_L^1_L^2}
\| \mathrm{e}^{-t\tilde{H}} u \|_{2,\mu} = \lim_{k \to \infty}  \| \mathrm{e}^{-t\tilde{H}} u_{k} \|_{2,\mu}
\leq C_{t} \lim_{k \to \infty} \| u_{k} \|_{1,\mu} =C_{t}  \| u \|_{1,\mu}
\end{equation}
is satisfied for every $u \in \mathrm{L}_{\mu}^{\infty}(\R^{n})$ being non-negative almost everywhere in $\R^{n}$.

\item Let $u \in \mathrm{L}_{\mu}^{1}(\R^{n})$ be non-negative almost everywhere in $\R^{n}$. Mind that 
$\mathrm{e}^{-t\tilde{H}} u \in \mathrm{L}_{\mu}^{1}(\R^{n})$ is true but we can not confirm 
$\mathrm{e}^{-t\tilde{H}} u \in \mathrm{L}_{\mu}^{2}(\R^{n})$ at this point. We define
$u_{k} = 1_{\{ u \leq k \}} u \in  \mathrm{L}_{\mu}^{\infty}(\R^{n}) \subset \mathrm{L}_{\mu}^{1}(\R^{n})$. 
Then $(u_{k})$ converges pointwise to $u$ almost everywhere in $\R^{n}$ and $|u_{k}(x) - u(x)| \leq u(x)$. Hence 
$(u_{k})$ converges to $u$ in $\mathrm{L}_{\mu}^{1}(\R^{n})$ by Lebesgue's theorem. Notice that
$\mathrm{e}^{-t\tilde{H}} u_{k} \to \mathrm{e}^{-t\tilde{H}} u$ in $\mathrm{L}_{\mu}^{1}(\R^{n})$ is implied
since $\mathrm{e}^{-t\tilde{H}}$ is a contraction in $\mathrm{L}_{\mu}^{1}(\R^{n})$. But that does not necessarily imply
convergence in $\mathrm{L}_{\mu}^{2}(\R^{n})$. Please mind that 
$\mathrm{e}^{-t\tilde{H}} u_{k} \in \mathrm{L}_{\mu}^{\infty}(\R^{n})$ is contained in  
$\mathrm{L}_{\mu}^{1}(\R^{n}) \cap  \mathrm{L}_{\mu}^{2}(\R^{n})$ such that $(\mathrm{e}^{-t\tilde{H}} u_{k} )$
is a Cauchy sequence in $\mathrm{L}_{\mu}^{2}(\R^{n})$ by (\ref{weighted_semigroup_bounded_L^1_L^2}). 
Hence the limit $h = \lim_{k \to \infty}  \mathrm{e}^{-t\tilde{H}} u_{k}$ is contained in $\mathrm{L}_{\mu}^{2}(\R^{n})$.
Then $h \in \mathrm{L}_{\mu}^{1}(\R^{n})$ is implied since $\mu$ is a probability measure. From
\begin{align*}
\| \mathrm{e}^{-t\tilde{H}} u - h \|_{1,\mu} & \leq \| \mathrm{e}^{-t\tilde{H}} u - \mathrm{e}^{-t\tilde{H}} u_{k} \|_{1,\mu}
+ \| \mathrm{e}^{-t\tilde{H}} u_{k} - h \|_{1,\mu} \\
& \leq  \| \mathrm{e}^{-t\tilde{H}} u - \mathrm{e}^{-t\tilde{H}} u_{k} \|_{1,\mu}
+ \| \mathrm{e}^{-t\tilde{H}} u_{k} - h \|_{2,\mu} 
\end{align*} 
we conclude $\mathrm{e}^{-t\tilde{H}} u = h$ and therefore $\mathrm{e}^{-t\tilde{H}} u \in \mathrm{L}_{\mu}^{2}(\R^{n})$
with
\begin{equation}
\| \mathrm{e}^{-t\tilde{H}} u \|_{2,\mu} = \lim_{k \to \infty}  \| \mathrm{e}^{-t\tilde{H}} u_{k} \|_{2,\mu}
\leq C_{t} \lim_{k \to \infty} \| u_{k} \|_{1,\mu} =C_{t}  \| u \|_{1,\mu}
\end{equation}
for every $u \in \mathrm{L}_{\mu}^{1}(\R^{n})$ being non-negative almost everywhere in $\R^{n}$.

\item For $u \in \mathrm{L}_{\mu}^{1}(\R^{n})$ being real-valued but not necessarily non-negative almost everywhere in $\R^{n}$
we write $u=u^{+} - u^{-}$ for $0 \leq u^{+} \in \mathrm{L}_{\mu}^{1}(\R^{n})$ and $0 \leq u^{-} \in \mathrm{L}_{\mu}^{1}(\R^{n})$
for the positive and negative part of $u$. We conclude to
$$
\| \mathrm{e}^{-t\tilde{H}} u \|_{2,\mu}  
\leq \| \mathrm{e}^{-t\tilde{H}} u^{+} \|_{2,\mu}  + \| \mathrm{e}^{-t\tilde{H}} u^{-} \|_{2,\mu} \leq 
C_{t} \left( \| u^{+} \|_{1,\mu} + \| u^{-} \|_{1,\mu} \right) = C_{t}  \| u \|_{1,\mu}.
$$

\item For a general $u \in \mathrm{L}_{\mu}^{1}(\R^{n})$ we argue similar as above by writing 
$u = \operatorname{Re} u + \mathrm{i}  \operatorname{Im} u$ with 
$\operatorname{Re} u$ and $\operatorname{Im} u$ being real-valued in $\mathrm{L}_{\mu}^{1}(\R^{n})$. Hence 
the weighted Schrödinger semigroup operators $\mathrm{e}^{-t\tilde{H}}$ map $\mathrm{L}_{\mu}^{1}(\R^{n})$ 
into $\mathrm{L}_{\mu}^{2}(\R^{n})$ continuously.

\end{enumerate}

\begin{rem}
Notice that the same arguments can be used to prove that $\mathrm{e}^{-t\tilde{H}}$ maps 
$\mathrm{L}_{\mu}^{q_{1}}(\R^{n})$ into $\mathrm{L}_{\mu}^{q_{2}}(\R^{n})$ continuously for any
$1 \leq q_{1} < q_{2} < \infty$ by using 
$$
p(s) = \frac{s}{t}q_{2} + \left( 1 -  \frac{s}{t} \right) q_{1}
$$
for $s \in [0,t]$. Then $p$ is strictly monotone increasing with $p(0)=q_{1}$ and $p(t) = q_{2}$.\\
\end{rem}

\subsection{Duality argumentation for 
$\mathrm{e}^{-t\tilde{H}} \in \mathcal{B}\left( \mathrm{L}_{\mu}^{1}(\R^{n}), \mathrm{L}_{\mu}^{2}(\R^{n}) \right)$}

The adjoint $S(t)$ of 
$\mathrm{e}^{-t\tilde{H}} \in \mathcal{B}\left( \mathrm{L}_{\mu}^{1}(\R^{n}), \mathrm{L}_{\mu}^{2}(\R^{n}) \right)$ maps 
$\mathrm{L}_{\mu}^{2}(\R^{n})$ into $\mathrm{L}_{\mu}^{\infty}(\R^{n})$ continuously with
$$
\left\| S(t) u \right\|_{\infty,\mu} \leq C_{t}   \| u \|_{2,\mu}
$$
for every $u \in \mathrm{L}_{\mu}^{2}(\R^{n})$.  We have to show that both operators 
$S(t)$ and $\mathrm{e}^{-t\tilde{H}}$ coincide in $\mathrm{L}_{\mu}^{2}(\R^{n})$.
Therefore let $u,v \in \mathrm{L}_{\mu}^{2}(\R^{n})$. Please remember that 
$\mathrm{L}_{\mu}^{2}(\R^{n})$ is contained in $\mathrm{L}_{\mu}^{1}(\R^{n})$ 
and
$$
S(t) v \in \mathrm{L}_{\mu}^{\infty}(\R^{n}) \subset \mathrm{L}_{\mu}^{2}(\R^{n})
$$
because of $\mu$ being a probability measure. Therefore we argue that
\begin{equation}
\langle u, \mathrm{e}^{-t\tilde{H}}v \rangle_{\mu} = \langle \mathrm{e}^{-t\tilde{H}} u, v \rangle_{\mu}
= \langle u, S(t) v \rangle_{\mu}
\end{equation}
holds where we used the self-adjointness of $\mathrm{e}^{-t\tilde{H}}$ in $\mathrm{L}_{\mu}^{2}(\R^{n})$ for the 
first equation and understand $u$ as an element of $\mathrm{L}_{\mu}^{1}(\R^{n})$ in the second equation.
The operators $S(t)$ and $\mathrm{e}^{-t\tilde{H}}$ coincide in $\mathrm{L}_{\mu}^{2}(\R^{n})$
which implies
\begin{equation}
\left| \mathrm{e}^{-tH} (\varphi u) (x) \right| \ \leq \ C_{t} \varphi(x) \| \varphi u \|_{2} 
\end{equation}
for every $u \in \mathrm{L}_{\mu}^{2}(\R^{n})$ by the definition of  $\mathrm{e}^{-t\tilde{H}}$. Finally setting
$u = v/\varphi$ for $v \in \mathrm{L}^{2}(\R^{n})$ proves Theorem \ref{IU_of_e^{-tH}}.\\

\section{Conclusion}

In this article, we have established the intrinsic ultracontractivity of the Schr\"odinger semigroup
$\mathrm{e}^{-tH}$ for a broad class of non-negative potentials $q$ on $\mathrm{L}^{2}(\mathbb{R}^{n})$.\\
\ \\
This present work presents a new proof  for the intrinsic ultracontractivity of $\mathrm{e}^{-tH}$
based on a duality argument which offers even better results than \cite{Schwerdt_Mill_Hundertmark_2025}. 
In particular, we show that the weighted Schr\"odinger semigroup
map continuously from $\mathrm{L}^{1}_{\mu}(\mathbb{R}^{n})$ into $\mathrm{L}^{2}_{\mu}(\mathbb{R}^{n})$ 
which implies intrinsic ultracontractivity follows by self-adjointness.\\
\ \\
This methodology not only simplifies the growth assumptions imposed on the potential $q$, thereby
allowing for a wider range of parameters $k>0$, but also provides a more robust analytical framework
for investigating the contractive properties of weighted Schr\"odinger semigroups.

\newpage

\appendix

\section{Agmon's version of the comparison principle} \label{Appendix_comparison_principle}

Let $h$ denote the quadratic form introduced in Section~\ref{Definition_h_H}, and let $H$ be the associated self-adjoint operator acting on $\mathrm{L}^{2}(\mathbb{R}^{n})$.

\begin{dfn}\label{Definition_sub_supersolution}
Let $U \subseteq \mathbb{R}^{n}$ be an open set.
\begin{enumerate}[a)]
\item
The \emph{local form domain} of $h$ on $U$ is defined by
\begin{equation}
D^{U}_{\mathrm{loc}}(h)
:= \left\{ u \in \mathrm{L}^{2}_{\mathrm{loc}}(U) \;\middle|\;
\chi u \in D(h) \text{ for all } \chi \in C_{c}^{\infty}(U) \right\}.
\end{equation}
In other words, $D^{U}_{\mathrm{loc}}(h)$ consists of all functions that belong locally to the form domain of $h$.

\item
A function $u$ is called a \emph{supersolution} of $H$ in $U$ with energy $E$ if
$u \in D^{U}_{\mathrm{loc}}(h)$ and
\begin{equation}
h(u,\xi) - E \langle u,\xi \rangle \geq 0
\end{equation}
for all non-negative test functions $\xi \in C_{c}^{\infty}(U)$.
Here, the expression $h(u,\xi)$ is understood in the weak sense as
\[
h(u,\xi) = \langle \nabla u, \nabla \xi \rangle + \langle q(x)u, \xi \rangle,
\]
even if $u \notin D(h)$.

\item
A function $u$ is called a \emph{subsolution} of $H$ in $U$ with energy $E$ if
$u \in D^{U}_{\mathrm{loc}}(h)$ and
\begin{equation}
h(u,\xi) - E \langle u,\xi \rangle \leq 0
\end{equation}
for all non-negative test functions $\xi \in C_{c}^{\infty}(U)$.
\end{enumerate}
\end{dfn}

The following result is a variant of the comparison principle due to Agmon.
It corresponds to Theorem~2.7 in \cite{Hundertmark_Jex_Lange_2023}.

\begin{thm}[Agmon-type comparison principle]\label{comparision_principle}
Let $R>0$ and let $u$ be a strictly positive supersolution of $H$ with energy $E$
in a neighborhood of infinity, that is, in
\[
\Omega_R := \{ x \in \mathbb{R}^{n} \mid |x| > R \}.
\]
Assume that $v$ is a subsolution of $H$ in $\Omega_R$ with the same energy $E$ and that
\begin{equation}\label{L^2_property_Agmon}
\liminf_{N \to \infty}
\left(
\frac{1}{N^{2}}
\int_{N \le |x| \le \alpha N} |v(x)|^{2} \, \mathrm{d}x
\right) = 0
\end{equation}
for some $\alpha > 1$.

If there exist constants $\delta>0$ and $C \ge 0$ such that
\begin{equation}\label{boundary_property_Agmon}
v(x) \le C u(x)
\end{equation}
for almost every $x$ with $R < |x| \le R+\delta$, then the inequality
\[
v(x) \le C u(x)
\]
holds for almost every $x \in \Omega_R$.
\end{thm}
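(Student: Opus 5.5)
The statement to be proved is the Agmon-type comparison principle (Theorem \ref{comparision_principle}). I would prove it by a ground-state-substitution (Agmon/Allegretto–Piepenbrink) energy argument applied to the positive part of $w = v - Cu$.

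\textbf{Setup.} Since $Cu$ is a supersolution and $v$ a subsolution with the same energy $E$, the difference $w$ is a subsolution in $\Omega_R$:
\[
h(w,\xi) - E\langle w,\xi\rangle \le 0
\]
for all non-negative $\xi \in C_c^\infty(\Omega_R)$. By density, this extends to non-negative $\xi$ in the local form domain with compact support in $\Omega_R$.

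\textbf{Test function.} The natural test function is $\xi = \eta^2 w_+^2/u$, where $\eta$ is a cutoff. One could also work with $\eta^2 w_+$ directly, but then the positivity of $u$ would not be used. Writing $w_+ = u g$ with $g = w_+/u \ge 0$, I would use the ground-state identity instead. Since $u$ is a positive supersolution, for non-negative $\psi$ one gets
\[
h(\eta u g,\eta u g) - E\|\eta u g\|^2 \ge \int |\nabla(\eta g)|^2 u^2 \d x,
\]
and combining this with the subsolution inequality for $w$ tested against $\eta^2 w_+$ gives
\[
\int \eta^2 |\nabla g|^2 u^2 \d x \;\le\; C\int |\nabla \eta|^2 w_+^2 \d x
\]
after absorbing cross terms by Cauchy--Schwarz. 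To make these test functions admissible near $|x|=R$, I would use the boundary hypothesis \eqref{boundary_property_Agmon}: $w_+ = 0$ on $R<|x|\le R+\delta$. Hence $\eta$ may be taken equal to $1$ there and cut off only at infinity, and $\eta^2 w_+$ has support inside $\Omega_{R}$ away from the inner boundary.

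\textbf{Cutoff at infinity.} I would choose $\eta_N = 1$ for $|x|\le N$, $\eta_N = 0$ for $|x| \ge \alpha N$, with $|\nabla\eta_N| \lesssim 1/((\alpha-1)N)$. The right-hand side is then bounded by
\[
\frac{c}{N^2}\int_{N\le|x|\le\alpha N} w_+^2 \d x \;\le\; \frac{c}{N^2}\int_{N\le|x|\le\alpha N} |v|^2 \d x,
\]
using $0 \le w_+ \le v_+$ because $u>0$. Along a subsequence this tends to $0$ by \eqref{L^2_property_Agmon}. Therefore $\nabla g = 0$ on $\Omega_R$ (weighted by $u^2>0$). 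Since $\Omega_R$ is connected for $n\ge 2$, $g$ is constant, and it vanishes on the shell $R<|x|\le R+\delta$, so $g \equiv 0$. This gives $v \le Cu$ almost everywhere in $\Omega_R$.

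\textbf{Main obstacle.} The hard step is the rigorous justification that $w_+^2/u$ and $\eta^2 w_+$ lie in the local form domain and that the chain and product rules hold. This is needed because $u$ is only a strictly positive supersolution in $D_{\mathrm{loc}}^{\Omega_R}(h)$ and is not known to be bounded below locally. I would handle it by replacing $u$ with $u+\epsilon$ (or truncating $g$ at level $M$), carrying out the estimate, and letting $\epsilon\to 0$ (respectively $M\to\infty$) by monotone convergence. This is the technical core of Theorem~2.7 in \cite{Hundertmark_Jex_Lange_2023}, which I would follow.
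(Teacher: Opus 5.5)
Your argument is correct and is the standard Agmon proof: the Caccioppoli-type bound $h(\eta w_+,\eta w_+)-E\|\eta w_+\|^2\le\int|\nabla\eta|^2 w_+^2$ for the subsolution $w=v-Cu$ is combined with the ground-state representation through the positive supersolution $u$, and the cutoff at infinity is controlled by \eqref{L^2_property_Agmon}. The paper gives no proof of its own and only cites Theorem~2.7 of \cite{Hundertmark_Jex_Lange_2023}; your route is the argument behind that result, and the points you defer to that reference are the ones left to handle there, namely admissibility of $\eta^2 w_+^2/u$ and finiteness of $\int u^2\eta^2|\nabla g|^2$ before absorbing.
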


\begin{rem} \label{Remark_comparision_principle} \ 
\begin{enumerate}[i.)]
\item In its original form  \cite{Agmon85}
Agmon assumes that the supersolution $u$ as well as the subsolution $v$ are both continuous in $\overline{\Omega_{R}}$. 
However, this additional assumption of $u$ and $v$ being continuous are only made to guarantee that
condition (\ref{boundary_property_Agmon}) holds for a constant $C = c_{2}/c_{1}$ with 
$c_{1} = \inf_{R \leq |x| \leq R+ \delta} u(x)$ and $c_{2} = \sup_{R \leq |x| \leq R+ \delta} \left| v(x) \right|$ and arbitrary $\delta > 0$.
Please see Remark 2.9 on page 12 in \cite{Hundertmark_Jex_Lange_2023} for further details.
\item Furthermore, for $v \in \mathrm{L}^{2}\left( \Omega_{R} \right)$ condition (\ref{L^2_property_Agmon}) is already implied. \\
\end{enumerate}
\end{rem}

Please notice that we consider Schrödinger potentials $q$ such that $|x|^{2} < q(x)$ holds for every $R < |x|$. Using  Theorem \ref{comparision_principle} we are able to determine the asymptotic behaviour of
the ground state $\varphi$ of $H$.\\

\begin{cor}\label{Asymptotical_behaviour_varphi}
Let $q$ satisfy $|x|^{2} \leq q(x)$ for $R < |x|$ and a given radius $R > 0$. Furthermore, let 
$E_{0} = \min \left( \sigma(H) \right) \geq 0$ be the ground state energy of $H$ with a continuous and 
strictly positive ground state $\varphi$. Then there exists a constant $C > 0$ such that
\begin{equation}
\varphi(x) \ \leq \ C \mathrm{e}^{ -|x| }
\end{equation} 
holds for every $x \in \R^{n}$.
\end{cor}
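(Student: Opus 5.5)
The plan is to apply the Agmon-type comparison principle, Theorem \ref{comparision_principle}, with $v=\varphi$ as the subsolution and an explicit exponential $u(x)=\mathrm{e}^{-|x|}$ as the strictly positive supersolution on $\Omega_{R'}$, for a suitably large radius $R'\geq R$. The ground state $\varphi$ is a weak solution of $H\varphi=E_{0}\varphi$. Hence $h(\varphi,\xi)-E_{0}\langle\varphi,\xi\rangle=0$ for all test functions $\xi$, and in particular $\varphi$ is a subsolution with energy $E_{0}$ in every $\Omega_{R'}$. Since $\varphi\in\mathrm{L}^{2}(\R^{n})$, condition (\ref{L^2_property_Agmon}) holds automatically by Remark \ref{Remark_comparision_principle} ii.).

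The main computation is to check that $u(x)=\mathrm{e}^{-|x|}$ is a supersolution with energy $E_{0}$ in $\Omega_{R'}$. On $|x|>0$ the function $u$ is smooth, and with $r=|x|$ we have
$$
\Delta u = u''(r)+\frac{n-1}{r}u'(r) = \left(1-\frac{n-1}{r}\right)\mathrm{e}^{-r}.
$$
Therefore
$$
(-\Delta+q-E_{0})u = \left(q(x)-1+\frac{n-1}{|x|}-E_{0}\right)\mathrm{e}^{-|x|} \geq \left(|x|^{2}-1-E_{0}\right)\mathrm{e}^{-|x|},
$$
and the right-hand side is non-negative once $|x|^{2}\geq 1+E_{0}$ and $|x|>R$. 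Choosing $R'=\max\{R,\sqrt{1+E_{0}}\}$ and integrating by parts against non-negative $\xi\in C_{c}^{\infty}(\Omega_{R'})$ gives $h(u,\xi)-E_{0}\langle u,\xi\rangle\geq 0$. There are no boundary terms, because $\xi$ has compact support away from the origin. Also $u\in D^{\Omega_{R'}}_{\mathrm{loc}}(h)$, since $\chi u$ is smooth with compact support for every $\chi\in C_{c}^{\infty}(\Omega_{R'})$, and $q$ is continuous and hence locally bounded.

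Next comes the boundary condition (\ref{boundary_property_Agmon}). On the compact annulus $R'\leq|x|\leq R'+1$, the function $\varphi$ is continuous and hence bounded above by some $c_{2}$. The function $u$ is bounded below by $c_{1}=\mathrm{e}^{-R'-1}>0$. Thus $\varphi\leq C'u$ there with $C'=c_{2}/c_{1}$, and Theorem \ref{comparision_principle} yields $\varphi(x)\leq C'\mathrm{e}^{-|x|}$ for almost every $|x|>R'$. Continuity of both sides upgrades this to every such $x$.

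Finally, on the ball $\overline{B_{R'}(0)}$ the continuous function $\varphi\,\mathrm{e}^{|x|}$ is bounded. Enlarging the constant to $C=\max\{C',\sup_{|x|\leq R'}\varphi(x)\mathrm{e}^{|x|}\}$ gives the claim on all of $\R^{n}$. The only step needing real care is the supersolution verification: checking that the Laplacian computation is legitimate in the weak form sense away from the origin, and choosing $R'$ large enough to absorb $E_{0}$ and the $-1$. The rest is bookkeeping.
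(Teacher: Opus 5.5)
Your proposal is correct and follows the paper's proof essentially step for step: Agmon's comparison principle with $v=\varphi$ as subsolution and $u=\mathrm{e}^{-|x|}$ as supersolution on $\Omega_{\tilde R}$ with $\tilde R=\max(R,\sqrt{1+E_{0}})$, the boundary condition from continuity on an annulus, and a compactness bound on the ball. The only cosmetic differences are that the paper reaches $\mathrm{e}^{-|x|}$ through the ansatz $\mathrm{e}^{-|x|^{\beta}}$ with $\beta=1$ and adds rather than maximizes the two constants, while you spell out the a.e.-to-everywhere upgrade and the local form domain check explicitly.
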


\begin{proof}
\begin{enumerate}[i.)]
\item We use Theorem \ref{comparision_principle}. Therefore, please mind that 
$\varphi \in D(H) \subseteq \mathrm{L}^{2} \left( \R^{n} \right)$ is a subsolution of $H$ with the energy $E_{0}$
in $\Omega_{r}$ for any radius $r > 0$. We will use $|x|^{2} \leq q(x)$ for $R < |x|$ 
to define a positive and continuous supersolution $u \colon \R^{n} \to \R$ of $H$ with $E_{0}$ in $\Omega_{\tilde{R}}$ 
for a radius $\tilde{R} > R$. 
Then $\varphi$ satisfies condition (\ref{L^2_property_Agmon}) since it is contained in $\mathrm{L}^{2} \left( \R^{n} \right)$.
Furthermore, due to the continuity of $\varphi$ and $u$ condtion (\ref{boundary_property_Agmon}) is true by 
Remark \ref{Remark_comparision_principle}.
So there exists a constant $K \geq 0$ with
$$
\varphi(x) \ \leq \ K u(x)
$$
for every $x \in \Omega_{\tilde{R}}$.
\item We are concerned with solutions to the following eigenvalue inequality
\begin{equation} \label{Supersolution_H}
0 \ \leq \ \left( -\Delta + |x|^{2} \right)u(x) - E_{0}u(x)
\end{equation}
for a radius  $\tilde{R} > 0$ and $|x| \geq \tilde{R}$. The idea now is to find a $\beta \in \R$ such that 
$u(x) = \exp \left(- |x|^{\beta} \right)$ is a solution to (\ref{Supersolution_H}).
With a slight abuse of notation let us write $u(r) =  \exp \left(- r^{\beta} \right)$
and consider the radial Schrödinger inequality
\begin{align*}
0 \ & \leq \ - u^{\prime \prime}(r) - \frac{n-1}{r} u^{\prime}(r) + r^{2} u(r) - E_{0} u(r) \\
& = \ u(r) \left( - \beta^{2} r^{2(\beta - 1)} + \beta \left( \beta - 2 +n \right) r^{\beta - 2}  + r^{2} - E_{0} \right).
\end{align*}
We choose $\beta = 1$ such that $u(x) = \mathrm{e}^{ -|x| }$ satisfies (\ref{Supersolution_H}) for 
$|x| \geq \left( 1 + E_{0} \right)^{\frac{1}{2}}$.
\item Let $\tilde{R} = \max \left( R, \left( 1+ E_{0} \right)^{\frac{1}{2}} \right)$ and 
$\xi \in C_{c}^{\infty} \left( \Omega_{\tilde{R}} \right)$ be non-negative. Then
\begin{equation}
0 \leq \langle \left(-\Delta + |x|^{2} - E_{0} \right) u, \xi \rangle \leq \langle \left(-\Delta + q(x) - E_{0} \right) u, \xi \rangle
= h(u, \xi) - E_{0} \langle u, \xi \rangle
\end{equation}
follows and hence there exists a constant $K \geq 0$ such that $\varphi(x) \leq K \mathrm{e}^{ -|x| }$ holds for
every $x \in \Omega_{\tilde{R}}$ by Theorem \ref{comparision_principle}. Mind that for 
$|x| \leq \tilde{R}$ we conclude 
$$
\varphi(x) \leq \max_{|x| \leq \tilde{R}} \varphi(x) =: m \leq m\mathrm{e}^{ \tilde{R}} \ \mathrm{e}^{ -|x| }.
$$
Therefore $\varphi(x) \leq (K +  m\mathrm{e}^{ \tilde{R}}) \ \mathrm{e}^{ -|x| }$ holds for every $x \in \R^{n}$.
\end{enumerate}
\end{proof}

For a given intrinsic ultracontractivity of $\mathrm{e}^{-tH}$ with $|x|^{2} < q(x)$ for every $R < |x|$, the asymptotic 
behaviour of $\mathrm{e}^{-tH}u$ is determined by $\mathrm{e}^{-|x|}$ at every time $t$.\\

\begin{cor}\label{Cor_Appendix_Agmon}
Let $q$ satisfy $|x|^{2} \leq q(x)$ for $R < |x|$ and a given radius $R > 0$. Additionally, let the Schrödinger semigroup
 $\mathrm{e}^{-tH}$ be intrinsic ultracontractive. Then for every $t > 0$ there 
exists a constant $C_{t} > 0$ that satisfies
\begin{equation}
\left| \mathrm{e}^{-tH}u (x) \right| \ \leq \ C_{t} \ \mathrm{e}^{ -|x| }  \|u\|_{2}
\end{equation}
for every $u \in \mathrm{L}^{2} \left( \R^{n} \right)$ and almost every $x \in \R^{n}$.
\end{cor}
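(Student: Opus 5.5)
The plan is to combine the intrinsic ultracontractivity bound with the pointwise decay of the ground state from Corollary \ref{Asymptotical_behaviour_varphi}. By hypothesis $\mathrm{e}^{-tH}$ is intrinsic ultracontractive. In the sense of Theorem \ref{IU_of_e^{-tH}}, this means that for every $t>0$ there is a $\tilde C_{t}>0$ with
$$
\bigl| \mathrm{e}^{-tH}u(x) \bigr| \leq \tilde C_{t}\, \varphi(x)\, \|u\|_{2}
$$
for almost every $x\in\R^{n}$ and every $u\in \mathrm{L}^{2}(\R^{n})$. If intrinsic ultracontractivity is instead understood via the kernel characterization (Lemma 4.2.2 of \cite{Davies07}, quoted in the introduction), the same estimate follows. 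Using $0\le k(t,x,y)\le C_t\varphi(x)\varphi(y)$ and Cauchy--Schwarz with $\|\varphi\|_2=1$, we get
$$
|\mathrm{e}^{-tH}u(x)|\le C_t\varphi(x)\int\varphi|u|\,\d y\le C_t\varphi(x)\|u\|_2 .
$$

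Next, the hypothesis $|x|^{2}\le q(x)$ for $|x|>R$ is exactly the assumption of Corollary \ref{Asymptotical_behaviour_varphi}. The ground state $\varphi$ is continuous and strictly positive with $E_{0}\ge 0$, which holds since $q\ge 0$ makes $H\ge 0$. The corollary therefore gives a constant $K>0$ with $\varphi(x)\le K\mathrm{e}^{-|x|}$ for all $x\in\R^{n}$.

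Inserting this into the first estimate yields
$$
|\mathrm{e}^{-tH}u(x)|\le \tilde C_{t}K\,\mathrm{e}^{-|x|}\|u\|_{2}
$$
almost everywhere, so the claim holds with $C_{t}:=\tilde C_{t}K$. This constant is independent of $u$ and of $x$, since $K$ depends only on $q$ through $R$ and $E_0$.

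There is no real obstacle here. The only point requiring care is checking that the hypotheses of Corollary \ref{Asymptotical_behaviour_varphi} are met, in particular $E_{0}\ge 0$ and the continuity and positivity of $\varphi$, which are taken from Section \ref{Definition_h_H} and \cite{Schwerdt_Mill_Hundertmark_2025}. One should also note that the exceptional null set depends on $u$, which is harmless because the statement is made for each fixed $u$.
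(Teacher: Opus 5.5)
Your proposal is correct and is essentially the argument the paper intends. The paper gives no separate written proof; the sentence preceding the corollary indicates it is exactly this combination of the intrinsic ultracontractivity bound $\bigl|\mathrm{e}^{-tH}u(x)\bigr|\le C_t\varphi(x)\|u\|_2$ with the ground-state decay $\varphi(x)\le K\mathrm{e}^{-|x|}$ from Corollary \ref{Asymptotical_behaviour_varphi}, giving the constant $C_tK$.
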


\newpage

\bibliography{references}
\bibliographystyle{plain}

\end{document}